\title{Existence of pseudoheavy fibers of moment maps}
\author{Morimichi Kawasaki} 
\address[Morimichi Kawasaki]{Research Institute for Mathematical Sciences, Kyoto University, Kyoto 606-8502, Japan}
\email{kawasaki@kurims.kyoto-u.ac.jp}
\author{Ryuma Orita} 
\address[Ryuma Orita]{Department of Mathematical Sciences, Tokyo Metropolitan University, Tokyo 192-0397, Japan}
\email{ryuma.orita@gmail.com}
\urladdr{https://ryuma-orita.github.io/}
\date{January 31, Reiwa 2}
\thanks{The first named author has been supported by IBS-R003-D1. This work has been supported by JSPS KAKENHI Grant Numbers JP18J00765, JP18J00335.}
\subjclass[2010]{Primary 57R17, 53D12; Secondary 53D20, 53D40, 53D45}
\keywords{Symplectic manifolds; the groups of Hamiltonian diffeomorphisms; moment maps; symplectic quasi-states; heavy subsets.}
\newtheorem{theorem}{Theorem}[section]
\newtheorem{lemma}[theorem]{Lemma}
\newtheorem{proposition}[theorem]{Proposition}
\newtheorem{corollary}[theorem]{Corollary}
\newtheorem{problem}[theorem]{Problem}
\theoremstyle{definition}
\newtheorem{definition}[theorem]{Definition}
\newtheorem{example}[theorem]{Example}
\theoremstyle{remark}
\newtheorem{remark}[theorem]{Remark}
\newcommand{\Ham}{\mathop{\mathrm{Ham}}\nolimits}
\newcommand{\QH}{\mathop{\mathrm{QH}}\nolimits}
\newcommand{\supp}{\mathop{\mathrm{supp}}\nolimits}
\newcommand{\Area}{\mathop{\mathrm{Area}}\nolimits}
\newcommand{\Arccos}{\mathop{\mathrm{Arccos}}\nolimits}
\newcommand{\RR}{\mathbb{R}}
\newcommand{\ZZ}{\mathbb{Z}}
\newcommand{\CP}{\mathbb{C}P}
\newcommand{\relmiddle}[1]{\mathrel{}\middle#1\mathrel{}}
\begin{document}

\begin{abstract}
In the present paper, we introduce the notion of pseudoheaviness of closed subsets of closed symplectic manifolds
and prove the existence of pseudoheavy fibers of moment maps.
In particular, we generalize Entov and Polterovich's theorem, which ensures the existence of non-displaceable fibers.
As its application, we provide a partial answer to a problem posed by them, which asks the existence of heavy fibers.
Moreover, we obtain a family of singular Lagrangian submanifolds in $S^2\times S^2$ with strange rigidities.
\end{abstract}

\maketitle

\tableofcontents


\section{Introduction}\label{intro}

\subsection{Backgrounds}

Let $(M,\omega)$ be a closed symplectic manifold.
Let $C(M)$ (resp.\ $C^{\infty}(M)$) denote the set of continuous (resp.\ smooth) functions on $M$.
Given a finite-dimensional Poisson-commutative subspace $\mathbb{A}$ of $C^{\infty}(M)$,
the \textit{moment map} $\Phi\colon M\to\mathbb{A}^{\ast}$ is given by $F(x)=\langle\Phi(x),F\rangle$ for $x\in M$ and $F\in\mathbb{A}$.

A subset $X$ of $M$ is called \textit{displaceable} from a subset $Y\subset M$
if there exists a Hamiltonian $H\colon S^1\times M\to\mathbb{R}$ such that $\varphi_H(X)\cap\overline{Y}=\emptyset$,
where $\varphi_H$ is the \textit{Hamiltonian diffeomorphism} generated by $H$
(i.e., the time-1 map of the isotopy $\{\varphi_H^t\}_{t\in\RR}$ associated with the \textit{Hamiltonian vector field} $X_H$ defined by the formula $\iota_{X_{H_t}}\omega=-dH_t$ where $H_t=H(t,\cdot)$ for $t\in S^1=\RR/\mathbb{Z}$)
and $\overline{Y}$ is the topological closure of $Y$.
Otherwise, $X$ is called \textit{non-displaceable} from $Y$.

Since Gromov's famous work \cite{G}, it has been an important problem in symplectic geometry to find non-displaceable subsets.
Biran, Entov, and Polterovich \cite{BEP} proved that the standard moment map on the complex projective space has only one non-displaceable fiber using the Calabi quasi-morphism constructed in \cite{EP03}.
Entov and Polterovich \cite{EP06} generalized that argument and proved the following theorem.

\begin{theorem}[{\cite[Theorem 2.1]{EP06}, see also \cite[Theorem 6.1.8]{PR}}]\label{existence of non-disp fiber}
Let $(M,\omega)$ be a closed symplectic manifold and $\mathbb{A}$ a finite-dimensional Poisson-commutative subspace of $C^{\infty}(M)$.
Then, there exists $y_0\in\Phi(M)$ such that $\Phi^{-1}(y_0)$ is non-displaceable from itself.
\end{theorem}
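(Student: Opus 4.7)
The plan is to argue by contradiction using the Entov--Polterovich machinery of symplectic (partial) quasi-states. Assume we have at our disposal a functional $\zeta\colon C(M)\to\mathbb{R}$ with (i)~$\zeta(1)=1$, (ii)~monotonicity, (iii)~the \emph{vanishing} property $\zeta(F)=0$ whenever $\supp F$ is displaceable from itself, and (iv)~linearity on every Poisson-commutative subspace of $C^\infty(M)$. Only (i), (iii), and (iv) will be used; the existence of such a $\zeta$, typically extracted from a field summand of $\QH(M,\omega)$ via spectral invariants, is the genuinely hard input.

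Suppose for contradiction that every fiber $\Phi^{-1}(y)$, $y\in\Phi(M)$, is displaceable from itself. The argument has two steps. First, I would upgrade pointwise to local displaceability: since $\Phi^{-1}(y)$ is compact, any Hamiltonian $\varphi$ displacing it satisfies $\varphi(U)\cap U=\emptyset$ for some open $U\supset\Phi^{-1}(y)$, and continuity of $\Phi$ together with compactness of $M\setminus U$ then provides an open $V_y\subset\mathbb{A}^*$ containing $y$ with $\Phi^{-1}(V_y)\subset U$; in particular $\Phi^{-1}(V_y)$ is itself displaceable from itself. Compactness of $\Phi(M)\subset\mathbb{A}^*$ extracts a finite subcover $V_1,\dots,V_k$. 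Second, choose a smooth partition of unity $\{\rho_i\}_{i=1}^k$ on a neighborhood of $\Phi(M)$ subordinate to $\{V_i\}$ and pull back to $f_i:=\rho_i\circ\Phi\in C^\infty(M)$. The crucial observation is that although the $f_i$ need not lie in $\mathbb{A}$, each factors through $\Phi$, so the $f_i$ pairwise Poisson-commute and together with the constant function $1$ span a finite-dimensional Poisson-commutative subspace of $C^\infty(M)$. Applying $\zeta$ one obtains
\[
1=\zeta(1)=\zeta\Bigl(\sum_{i=1}^k f_i\Bigr)=\sum_{i=1}^k\zeta(f_i)
\]
from (i) and (iv), while $\supp f_i\subset\Phi^{-1}(V_i)$ combined with (iii) forces every term $\zeta(f_i)$ to vanish, a contradiction.

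The combinatorial heart of the proof is therefore short, and the real obstacle is the existence of $\zeta$ with properties (i), (iii), and (iv). I expect the remainder of the paper to weaken this axiomatic package, so that the same partition-of-unity scheme still delivers a contradiction under a more flexible notion of ``pseudoheavy'' fiber, thereby either dispensing with the field-summand assumption on $\QH(M,\omega)$ or strengthening the conclusion from non-displaceability to a form of heaviness.
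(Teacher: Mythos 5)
Your partition-of-unity scheme is exactly the classical Entov--Polterovich argument for this theorem, and it is also the skeleton of what this paper does: the paper does not reprove the statement directly but quotes it from \cite{EP06} and instead proves the stronger Theorem \ref{existence of pseudoheavy}, whose key step (the proof of Theorem \ref{theorem:AVSC stem is shv}) is precisely your step two --- pull back a partition of unity subordinate to a cover of (part of) $\Phi(M)$, note that the pullbacks Poisson-commute because they all factor through $\Phi$, and play quasi-subadditivity against vanishing/normalization. Your step one (upgrading displaceability of a compact fiber to displaceability of $\Phi^{-1}(V_y)$ for a small neighborhood $V_y$ of $y$, then extracting a finite subcover) is also correct and standard.

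The one genuine problem is your axiom (iv). Linearity on every finite-dimensional Poisson-commutative subspace is the defining property of a (non-partial) \emph{symplectic quasi-state}, and such a functional is not known to exist on an arbitrary closed symplectic manifold; the known constructions require extra hypotheses (e.g.\ a field summand of $\QH(M,\omega)$, as you yourself note). Since the theorem is asserted for every closed $(M,\omega)$, a proof conditioned on (iv) does not cover the general case. The fix is the one you half-anticipate: your chain of (in)equalities only uses the inequality $\zeta\bigl(\sum_i f_i\bigr)\leq\sum_i\zeta(f_i)$, i.e.\ quasi-subadditivity for Poisson-commuting functions (applied iteratively, which is legitimate here because all the $f_i$ factor through $\Phi$ and hence all partial sums commute with the next term). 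That property, together with normalization, monotonicity and vanishing, is exactly the package of a \emph{partial} symplectic quasi-state (Definition \ref{def:psqs}), which Entov and Polterovich construct on every closed symplectic manifold from the Oh--Schwarz spectral invariant of the fundamental class. With (iv) replaced by quasi-subadditivity and $\zeta$ taken to be such a partial quasi-state, your argument is complete and coincides with the original proof; the paper's contribution is then to rerun this scheme relative to one distinguished fiber (the NPH-stem), yielding a pseudoheavy --- not merely non-displaceable --- fiber.
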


To prove Theorem \ref{existence of non-disp fiber}, Entov and Polterovich \cite{EP06} introduced the concept of partial symplectic quasi-state $($see Definition \ref{def:psqs}$)$.
In \cite{EP09}, they introduced the notion of heaviness of closed subsets in terms of partial symplectic quasi-states.

\begin{definition}[{\cite[Definition 1.3]{EP09}}]
Let $\zeta\colon C(M)\to\mathbb{R}$ be a partial symplectic quasi-state on $(M,\omega)$.
A closed subset $X$ of $M$ is said to be \textit{$\zeta$-heavy} (resp.\ \textit{$\zeta$-superheavy}) if
\[
	\zeta(H)\geq\inf_X H \quad \left(\text{resp.}\ \zeta(H)\leq\sup_X H\right)
\]
for any $H\in C(M)$.
\end{definition}

Here we collect properties of (super)heavy subsets.

\begin{theorem}[{\cite[Theorem 1.4]{EP09}}]\label{prop:shv is hv}
Let $\zeta\colon C(M)\to\mathbb{R}$ be a partial symplectic quasi-state on $(M,\omega)$.
\begin{enumerate}
\item Every $\zeta$-superheavy subset is $\zeta$-heavy.
\item Every $\zeta$-heavy subset is non-displaceable from itself.
\item Every $\zeta$-heavy subset is non-displaceable from every $\zeta$-superheavy subset.
In particular, every $\zeta$-heavy subset intersects every $\zeta$-superheavy subset.
\end{enumerate}
\end{theorem}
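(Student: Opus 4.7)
The strategy is to exploit the defining analytic properties of a partial symplectic quasi-state $\zeta$, most crucially: the normalization $\zeta(1)=1$, monotonicity in $H$ (so that $\min H \le \zeta(H) \le \max H$), Hamiltonian invariance $\zeta(H\circ\varphi)=\zeta(H)$ for $\varphi\in\Ham(M,\omega)$, the vanishing property $\zeta(H)=0$ when $\supp H$ is displaceable, and quasi-subadditivity $\zeta(H+G)\le\zeta(H)+\zeta(G)$ on Poisson-commuting pairs. Parts (1)--(3) each reduce to plugging a cleverly chosen test function into one of these axioms.

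For part (1), given a $\zeta$-superheavy subset $X$ and an arbitrary $H\in C(M)$, the plan is to apply the superheaviness inequality to $-H$, obtaining $\zeta(-H)\le\sup_X(-H)=-\inf_X H$. I then combine this with the elementary identity $\zeta(H)+\zeta(-H)\ge 0$, which follows by applying quasi-subadditivity to the (trivially Poisson-commuting) pair $H,-H$ and using $\zeta(0)=0$. Rearranging yields $\zeta(H)\ge-\zeta(-H)\ge\inf_X H$, establishing heaviness.

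For part (2), suppose for contradiction that a $\zeta$-heavy subset $X$ is displaceable from itself by some $\varphi_F$. Since $X$ is closed, a standard compactness argument produces an open neighborhood $U$ of $X$ with $\varphi_F(\overline U)\cap\overline U=\emptyset$, so $\overline U$ is displaceable. I pick a continuous bump function $H\colon M\to[0,1]$ that is identically $1$ on $X$ and supported in $\overline U$. Vanishing gives $\zeta(H)=0$, while heaviness forces $\zeta(H)\ge\inf_X H=1$, a contradiction. Part (3) proceeds in the same spirit: assuming $\varphi(X)\cap Y=\emptyset$ for some Hamiltonian $\varphi$, Hamiltonian invariance of $\zeta$ shows that $\varphi(X)$ is again $\zeta$-heavy, so after renaming we may assume $X\cap Y=\emptyset$. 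I then choose $H\in C(M)$ with $H\equiv 1$ on $X$ and $H\equiv 0$ on $Y$. Heaviness of $X$ gives $\zeta(H)\ge 1$, while superheaviness of $Y$ gives $\zeta(H)\le 0$, again a contradiction; the final sentence of (3) is the special case $\varphi=\mathrm{id}$.

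The only genuine subtlety I anticipate is the transition in (2) from the existence of a displacing Hamiltonian for the closed set $X$ to the existence of a displaceable open neighborhood whose closure still supports a test function equal to $1$ on $X$; this requires the compactness of $M$ (and of $X$) together with continuity of $\varphi_F$, but is entirely routine. Once those neighborhoods are in hand, the proof is a matter of feeding the right function into the right axiom of $\zeta$.
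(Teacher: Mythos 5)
Your argument is correct, and it is essentially the standard proof: the paper itself does not prove this statement but imports it from \cite{EP09}, and your three steps (the inequality $\zeta(H)+\zeta(-H)\geq\zeta(0)=0$ combined with superheaviness of $X$ applied to $-H$ for (1); a displaceable closed neighborhood plus a bump function, vanishing, and heaviness for (2); Hamiltonian invariance plus a Urysohn function separating $X$ from $Y$ for (3)) coincide with the argument given there. One small point to make explicit: in the definition used in this paper, quasi-subadditivity is only assumed for \emph{smooth} Poisson-commuting pairs, so for a merely continuous $H$ the inequality $\zeta(H)+\zeta(-H)\geq 0$ in part (1) should be obtained by approximating $H$ uniformly by smooth functions (for which $\{H_n,-H_n\}=0$ trivially) and passing to the limit via the stability axiom; this is routine and does not affect the proof.
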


Entov and Polterovich posed the following problem relating to Theorem \ref{existence of non-disp fiber}.

\begin{problem}[{\cite[Section 1.8.2]{EP09}, see also \cite[Question 4.9]{E}}]\label{existence of heavy}
Let $(M,\omega)$ be a closed symplectic manifold and $\mathbb{A}$ a finite-dimensional Poisson-commutative subspace of $C^{\infty}(M)$.
Let $\zeta\colon C(M)\to\mathbb{R}$ be a partial symplectic quasi-state on $(M,\omega)$ made from the Oh--Schwarz spectral invariant $($see \cite{Sch,Oh05}$)$.
Then, does there exist $y_0\in\Phi(M)$ such that $\Phi^{-1}(y_0)$ is $\zeta$-heavy?
\end{problem}




\subsection{Main results}\label{sec:results}

Let $(M,\omega)$ be a closed symplectic manifold.
For an open subset $U$ of $M$,
let $\mathcal{H}(U)$ be the subset of $C(M)$ consisting of all functions supported in $U$.
We introduce the notion of \textit{pseudoheaviness} of closed subsets.

\begin{definition}
Let $\zeta\colon C(M)\to\mathbb{R}$ be a partial symplectic quasi-state on $(M,\omega)$.
A closed subset $X$ of $M$ is said to be \textit{$\zeta$-pseudoheavy}
if for any open neighborhood $U$ of $X$ there exists a function $F\in\mathcal{H}(U)$ such that $\zeta(F)>0$.
\end{definition}

By definition, every $\zeta$-heavy subset is $\zeta$-pseudoheavy.
The following proposition tells us the reason why we call such subsets pseudoheavy (compare Theorem \ref{prop:shv is hv}).

\begin{proposition}[\cite{KO}]\label{not pseudoheavy}
Let $\zeta\colon C(M)\to\mathbb{R}$ be a partial symplectic quasi-state on $(M,\omega)$.
If a closed subset $X$ of $M$ is $\zeta$-pseudoheavy,
then $X$ is non-displaceable from itself and from every $\zeta$-superheavy subset.
\end{proposition}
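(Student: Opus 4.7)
The plan is to argue both assertions by contradiction, exploiting the opposition between two roles that $\zeta$ plays: from below via pseudoheaviness (which produces functions $F$ with $\zeta(F)>0$ supported in arbitrarily small neighborhoods of $X$), and from above via either the vanishing axiom on displaceable supports (for non-displaceability from $X$ itself) or the defining inequality $\zeta(G)\leq\sup_Y G$ of a $\zeta$-superheavy set $Y$. The axioms of $\zeta$ I intend to invoke are monotonicity, Hamiltonian invariance $\zeta(F\circ\varphi)=\zeta(F)$, and vanishing $\zeta(F)=0$ whenever $\supp F$ is displaceable.

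First I would prove non-displaceability of $X$ from itself. Assume for contradiction that some $\varphi\in\Ham(M)$ satisfies $\varphi(X)\cap X=\emptyset$. Since $X$ and $\varphi(X)$ are disjoint compact subsets of $M$, a routine compactness/normality argument produces an open neighborhood $U$ of $X$ with $\varphi(U)\cap\overline{U}=\emptyset$; in particular $U$ is displaceable by $\varphi$. Pseudoheaviness of $X$ then furnishes $F\in\mathcal{H}(U)$ with $\zeta(F)>0$. But $\supp F\subset U$ is displaceable, so the vanishing axiom forces $\zeta(F)=0$, a contradiction.

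Next I would prove non-displaceability of $X$ from a $\zeta$-superheavy subset $Y$. Assume for contradiction that some $\varphi\in\Ham(M)$ satisfies $\varphi(X)\cap Y=\emptyset$, noting that $Y$ is closed by definition. Since $Y$ is closed and $\varphi(X)$ compact, I may shrink to an open neighborhood $U$ of $X$ with $\varphi(U)\cap Y=\emptyset$. By pseudoheaviness, choose $F\in\mathcal{H}(U)$ with $\zeta(F)>0$, and transport it by setting $G:=F\circ\varphi^{-1}$. Then $\supp G=\varphi(\supp F)\subset\varphi(U)$ is disjoint from $Y$, so $G\equiv0$ on $Y$ and hence $\sup_Y G=0$; the superheaviness inequality gives $\zeta(G)\leq 0$. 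On the other hand, Hamiltonian invariance gives $\zeta(G)=\zeta(F)>0$, contradicting the previous inequality.

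The only geometrically nontrivial step is the production of the shrunken neighborhood $U$ satisfying $\varphi(U)\cap\overline{U}=\emptyset$ (respectively $\varphi(U)\cap Y=\emptyset$), which is standard manifold topology. I do not anticipate any substantive obstacle elsewhere: once pseudoheaviness is unpacked, the whole argument reduces directly to the three axioms of a partial symplectic quasi-state listed above.
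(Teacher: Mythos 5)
Your proposal is correct and follows essentially the same route as the paper: pass from displaceability of $X$ to displaceability of a small open neighborhood $U$, then contradict pseudoheaviness via the vanishing axiom (self-displacement case) or via the superheaviness inequality combined with Hamiltonian invariance (the transported function $F\circ\varphi^{-1}$ plays the role of the paper's function in $\mathcal{H}\bigl(\phi(U)\bigr)$). No gaps; the argument matches the paper's proof in both structure and the axioms invoked.
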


We prove Proposition \ref{not pseudoheavy} in Section \ref{sec:proof}.
Our main theorem is the following one which asserts the existence of a pseudoheavy fiber instead of heavy one.

\begin{theorem}[Main Theorem]\label{existence of pseudoheavy}
Let $(M,\omega)$ be a closed symplectic manifold and $\mathbb{A}$ a finite-dimensional Poisson-commutative subspace of $C^{\infty}(M)$.
Let $\zeta\colon C(M)\to\mathbb{R}$ be a partial symplectic quasi-state on $(M,\omega)$.
Then, there exists $y_0\in\Phi(M)$ such that $\Phi^{-1}(y_0)$ is $\zeta$-pseudoheavy.
\end{theorem}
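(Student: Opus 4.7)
The plan is to argue by contradiction, adapting the scheme Entov and Polterovich used for Theorem~\ref{existence of non-disp fiber}: where their argument produced a Poisson-commuting partition of unity supported in displaceable opens, I will produce one supported in opens on which $\zeta$ is nonpositive.

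Suppose, toward contradiction, that every fiber $\Phi^{-1}(y)$ fails to be $\zeta$-pseudoheavy. For each $y\in\Phi(M)$ this gives an open neighborhood $U_y$ of $\Phi^{-1}(y)$ in $M$ such that $\zeta(F)\leq 0$ for every $F\in\mathcal{H}(U_y)$. Since $\Phi$ is continuous and $\Phi^{-1}(y)$ is compact, I can shrink to find an open neighborhood $W_y$ of $y$ in $\mathbb{A}^{\ast}$ with $\Phi^{-1}(W_y)\subset U_y$. Compactness of $\Phi(M)$ then yields a finite subcover $W_{y_1},\ldots,W_{y_k}$.

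Next, I choose a smooth partition of unity $\{\rho_1,\ldots,\rho_k\}$ on an open neighborhood of $\Phi(M)$ in $\mathbb{A}^{\ast}$ subordinate to $\{W_{y_i}\}$, and set $G_i:=\rho_i\circ\Phi$. Then $G_i\geq 0$, $\supp(G_i)\subset U_{y_i}$, and $\sum_{i=1}^{k} G_i\equiv 1$ on $M$. The crucial observation is that the family $\{G_i\}$ is pairwise Poisson-commutative: fixing a basis $A_1,\ldots,A_n$ of $\mathbb{A}$, each $G_i$ is a smooth function of $(A_1,\ldots,A_n)$, so the chain rule together with $\{A_p,A_q\}=0$ yields $\{G_i,G_j\}=0$ for all $i,j$.

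Combining this with the normalization and subadditivity of $\zeta$ on Poisson-commuting families gives
\[
    1 \;=\; \zeta(1) \;=\; \zeta\!\left(\sum_{i=1}^{k} G_i\right) \;\leq\; \sum_{i=1}^{k}\zeta(G_i) \;\leq\; 0,
\]
a contradiction. The one delicate point I expect is the middle inequality: a partial symplectic quasi-state is only known to be subadditive on Poisson-commuting pairs, so one must inductively apply the two-variable version to the finite sum, exactly the standard step appearing in the proof of Theorem~\ref{existence of non-disp fiber}. Apart from this bookkeeping, the argument is a direct translation of the Entov--Polterovich original, with the vanishing of $\zeta$ on displaceable-support functions replaced by our pseudoheaviness hypothesis.
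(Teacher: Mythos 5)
Your argument is correct, but it is not the route the paper takes. The paper factors the proof through the notion of a $\zeta$-NPH-stem (Definition \ref{def:NPH-stem}): assuming no fiber is $\zeta$-pseudoheavy, every fiber is an NPH-stem, hence $\zeta$-superheavy by Theorem \ref{theorem:AVSC stem is shv}, and two disjoint superheavy fibers then contradict Theorem \ref{prop:shv is hv} (1) and (3). Your proof instead runs the partition-of-unity mechanism globally: cover all of $\Phi(M)$ by the sets $W_{y_i}$, pull back a subordinate partition of unity, and use normalization plus iterated quasi-subadditivity to reach $1\leq 0$. This is exactly the shape of the original Entov--Polterovich proof of Theorem \ref{existence of non-disp fiber} in \cite{EP06}, with ``$\zeta$ vanishes on functions with displaceable support'' replaced by ``$\zeta\leq 0$ on functions supported near a non-pseudoheavy fiber''; the same mechanism appears in the paper, but localized to $\Phi(M)\setminus V$ inside the proof of Theorem \ref{theorem:AVSC stem is shv}. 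Your version is more economical: it uses only normalization and quasi-subadditivity (the induction over pairwise Poisson-commuting summands that you flag is indeed the only bookkeeping needed, and $\{G_i,G_j\}=0$ follows from the chain rule as you say), and it bypasses Proposition \ref{prop:hv iff} and the heaviness/superheaviness machinery entirely. The paper's detour buys Theorem \ref{theorem:AVSC stem is shv} itself, which is strictly stronger information (the distinguished fiber of an NPH-stem is superheavy, not merely pseudoheavy) and is reused later, e.g.\ in the proof of Theorem \ref{thm:between}. One small point of justification: the existence of $W_y$ with $\Phi^{-1}(W_y)\subset U_y$ relies on $\Phi$ being a closed map because $M$ is compact --- take $W_y=\mathbb{A}^{\ast}\setminus\Phi(M\setminus U_y)$ --- not merely on continuity of $\Phi$ and compactness of the single fiber; with that noted, the step is fine and your proof goes through.
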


As written in Proposition \ref{not pseudoheavy},
any $\zeta$-pseudoheavy subset is non-displaceable from any $\zeta$-superheavy subset.
Thus, we can see Theorem \ref{existence of pseudoheavy} as a relative version of Theorem \ref{existence of non-disp fiber}.
For another relative version of Theorem \ref{existence of non-disp fiber},
see \cite{Ka2}.

In Section \ref{sec:example}, we will provide examples of closed subsets which are pseudoheavy, but not heavy.
Moreover, we will point out that the positive answer to Problem \ref{existence of heavy} does not hold for a general partial symplectic quasi-state.

In Section \ref{sec:simple}, we introduce a notion of simplicity of partial symplectic quasi-states (Definition \ref{def:simple}) and prove the following proposition.

\begin{proposition}\label{simple qs deha phv ha hv}
Let $\zeta\colon C(M)\to\mathbb{R}$ be a simple partial symplectic quasi-state on $(M,\omega)$.
Then, every $\zeta$-pseudoheavy subset is $\zeta$-heavy.
\end{proposition}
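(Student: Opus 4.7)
The natural plan is to prove the contrapositive: starting from a closed subset $X\subset M$ that is not $\zeta$-heavy, I exhibit an open neighborhood $U$ of $X$ for which every $F\in\mathcal{H}(U)$ satisfies $\zeta(F)\le 0$, contradicting $\zeta$-pseudoheaviness. By the failure of heaviness there exists $H\in C(M)$ with $\zeta(H)<\inf_X H$. Using the standard normalization property $\zeta(H+c)=\zeta(H)+c$ of a partial symplectic quasi-state, I may translate so that $\inf_X H=0$ and $\zeta(H)=-\delta$ for some $\delta>0$. Since $H\ge 0$ on $X$, the open set $U_\varepsilon:=\{x\in M:H(x)>-\varepsilon\}$ is a neighborhood of $X$ for every $\varepsilon>0$.

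Now I fix $\varepsilon\in(0,\delta)$ and let $F\in\mathcal{H}(U_\varepsilon)$ be arbitrary. By construction, $H+\varepsilon$ is strictly positive on $\supp F$, while globally $\zeta(H+\varepsilon)=\zeta(H)+\varepsilon=-\delta+\varepsilon<0$. The role of the simplicity of $\zeta$ (Definition \ref{def:simple}, to be introduced in Section \ref{sec:simple}) must be to convert exactly this configuration---a function $F$ whose support lies in the positivity locus of a function $G$ with $\zeta(G)<0$---into the inequality $\zeta(F)\le 0$. Granting such a domination principle as the content of simplicity, I obtain $\zeta(F)\le 0$ for every $F\in\mathcal{H}(U_\varepsilon)$, which is the desired contradiction to $\zeta$-pseudoheaviness of $X$.

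The main obstacle, and essentially the only substantive step, is the invocation of simplicity in the previous paragraph: the definition must be strong enough to turn the support inclusion $\supp F\subset\{H+\varepsilon>0\}$ into a comparison of $\zeta$-values, yet not so strong as to render Proposition \ref{simple qs deha phv ha hv} vacuous. If the actual formulation of simplicity applies only to nonnegative test functions, one should first reduce to that case by replacing $F$ with its positive part $F_{+}=\max(F,0)$: this remains in $\mathcal{H}(U_\varepsilon)$, and monotonicity of $\zeta$ yields $\zeta(F_{+})\ge\zeta(F)$, so it suffices to exclude $\zeta(F_{+})>0$. Once the correct form of the domination principle is in hand, the contrapositive conclusion is immediate, and no further properties of the partial symplectic quasi-state beyond normalization, monotonicity, and simplicity enter the argument.
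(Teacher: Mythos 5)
Your argument has a genuine gap at the only substantive step: you never prove anything about simplicity, you merely postulate that it ``must be'' a domination principle of the form ``$\supp F\subset\{G>0\}$ and $\zeta(G)<0$ imply $\zeta(F)\le 0$,'' and then invoke that postulate. The actual Definition \ref{def:simple} is quite different: simplicity means that the quasi-measure $\tau_{\zeta}(X)=\inf\{\zeta(a)\mid a\colon M\to[0,1],\ a|_X\equiv 1\}$ takes only the values $0$ and $1$ on closed sets. Deriving your domination principle from this is exactly the nontrivial content of the paper's argument, and it is absent from your write-up. Concretely, given $F\ge 0$ with $\zeta(F)>0$ and $K=\supp F\subset\{H+\varepsilon>0\}$, one first observes $F\le(\max_M F)\,a$ for every admissible $a$, so $\tau_{\zeta}(K)\ge\zeta(F)/\max_M F>0$, whence $\tau_{\zeta}(K)=1$ by simplicity; then one needs the implication $\tau_{\zeta}(K)=1\Rightarrow K$ is $\zeta$-heavy (Lemma \ref{tau one is heavy}), whose proof is the real work---it requires the truncation $\underline{G}=\min\{G,\inf_K G\}$ together with semi-homogeneity and stability, and is not a formal consequence of normalization and monotonicity. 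Only then does heaviness of $K$ give $\zeta(H+\varepsilon)\ge\inf_K(H+\varepsilon)>0$, contradicting $\zeta(H+\varepsilon)=-\delta+\varepsilon<0$. Also beware that one cannot shortcut via $\tau_{\zeta}(X)$ itself: pseudoheaviness does not force $\tau_{\zeta}(X)>0$ (there are pseudoheavy sets of quasi-measure zero), which is why the paper works with closures of neighborhoods $\overline{U}$ (Lemma \ref{open nbhd of phv}) and then passes to $X$ by Lemma \ref{lem:nbd is hv}.

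So your contrapositive skeleton (normalize $H$, form $U_\varepsilon=\{H>-\varepsilon\}$, reduce to $F_+\ge 0$) is salvageable and, once the missing implication is supplied, yields a correct proof organized slightly differently from the paper's (which argues directly: pseudoheavy $\Rightarrow\tau_{\zeta}(\overline U)>0\Rightarrow\tau_{\zeta}(\overline U)=1\Rightarrow\overline U$ heavy $\Rightarrow X$ heavy). But as it stands the proposal defers the entire mathematical content to an unverified guess about what ``simple'' means, so it cannot be accepted as a proof.
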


As an application of Theorem \ref{existence of pseudoheavy} and Proposition \ref{simple qs deha phv ha hv}, we have the following corollary which gives a partial answer to Problem \ref{existence of heavy}.

\begin{corollary}\label{existence of hv fiber in simple case}
Let $(M,\omega)$ be a closed symplectic manifold and $\mathbb{A}$ a finite-dimensional Poisson-commutative subspace of $C^{\infty}(M)$.
Let $\zeta\colon C(M)\to\mathbb{R}$ be a simple partial symplectic quasi-state on $(M,\omega)$.
Then, there exists $y_0\in\Phi(M)$ such that $\Phi^{-1}(y_0)$ is $\zeta$-heavy.
\end{corollary}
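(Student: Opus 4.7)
The plan is to obtain the corollary as an immediate two-step combination of the results already stated in the introduction. First, I would invoke Theorem \ref{existence of pseudoheavy} (the Main Theorem) applied to the moment map $\Phi\colon M\to\mathbb{A}^{\ast}$ together with the given partial symplectic quasi-state $\zeta$. Since a simple partial symplectic quasi-state is in particular a partial symplectic quasi-state, the hypothesis of Theorem \ref{existence of pseudoheavy} is satisfied, and it produces a value $y_0\in\Phi(M)$ for which the fiber $\Phi^{-1}(y_0)$ is $\zeta$-pseudoheavy.

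Second, I would promote this pseudoheavy fiber to a heavy one by using the extra simplicity assumption on $\zeta$. Concretely, since $\zeta$ is simple, Proposition \ref{simple qs deha phv ha hv} asserts that every $\zeta$-pseudoheavy subset is automatically $\zeta$-heavy. Applying it to the closed subset $\Phi^{-1}(y_0)$ furnished in the previous step yields the desired conclusion with the same value $y_0$.

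There is essentially no obstacle internal to the corollary itself: once the Main Theorem and Proposition \ref{simple qs deha phv ha hv} are in place, the deduction is a single concatenation of the two implications \emph{pseudoheavy fiber exists} and \emph{pseudoheavy implies heavy under simplicity}. The real content is upstream, lying in the proof of Theorem \ref{existence of pseudoheavy} (where the partial quasi-state machinery of Entov--Polterovich must be adapted to the neighborhood-based formulation of pseudoheaviness) and in the verification that the class of simple partial symplectic quasi-states is broad enough to make Proposition \ref{simple qs deha phv ha hv}, and hence this corollary, non-vacuous.
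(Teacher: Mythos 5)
Your proposal is correct and coincides with the paper's intended argument: the corollary is deduced exactly by combining Theorem \ref{existence of pseudoheavy} (which yields a $\zeta$-pseudoheavy fiber $\Phi^{-1}(y_0)$) with Proposition \ref{simple qs deha phv ha hv} (which upgrades pseudoheaviness to heaviness under simplicity). Nothing further is needed.
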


For a closed symplectic manifold $(M,\omega)$, let $\zeta_M$ denote the partial symplectic quasi-state on $(M,\omega)$ made from
the Oh--Schwarz spectral invariant with respect to the fundamental class $[M]$ of the quantum homology $\QH_\ast(M;\mathbb{Z}/2\mathbb{Z})$ of $M$.
Let $(S,\omega)$ be a closed Riemann surface $S$ with the symplectic form $\omega$.
Then, it is known that $\zeta_S(F^2)=\max\{\zeta_S(F)^2,\zeta_S(-F)^2\}$ for any function $F\colon S\to\RR$ (see \cite{EP06} for genus zero case, \cite{HLS} for positive genus case).
Under this condition, one can prove that the partial symplectic quasi-state $\zeta_S$ is simple.
Among experts, it is an open conjecture for many years that every symplectic quasi-state (see \cite{EP06} for the definition) made from the Oh--Schwarz spectral invariant is always simple.
However, it is known to be difficult to prove that these symplectic quasi-states are actually simple when the dimension of $M$ is greater than two.

In Section \ref{sec:CAM}, we obtain some singular Lagrangian submanifolds in $S^2\times S^2$ with strange rigidities.
To prove that strange rigidities, we use Theorem \ref{theorem:AVSC stem is shv} which is the key theorem for proving Theorem \ref{existence of pseudoheavy}.
To be more precise, we define functions $J_1,H^1\colon S^2\times S^2\to\RR$ by
\[
	J_1(p)=z_1+z_2%
	\quad\text{and}\quad%
	H^1(p)=x_1x_2+y_1y_2+z_1z_2,
\]
for each $p=(x_1,y_1,z_1,x_2,y_2,z_2)\in S^2\times S^2\subset\RR^3\times \RR^3$, respectively.
Then, $\{J_1,H^1\}=0$ (see Section \ref{sec:CAM}) and the integrable system $(J_1,H^1)$ is called (a special case of) the \textit{coupled angular momenta} \cite{SZ,LP,HP}.
We set $\Phi_1^1=(J_1,H^1)\colon S^2\times S^2\to\RR^2$.
Given $c\in [-1,-1/2]$, we set the Lagrangian submanifold $L_c=(\Phi_1^1)^{-1}(0,c)$ of $(S^2\times S^2,\omega_1)$ (see Section \ref{sec:CAM} for the definition of $\omega_1$ and more details).
Let $Z$ be a topological space obtained by pinching two disjoint meridians in the 2-torus $T^2$ as shown in \textsc{Figure} \ref{fig:torus}.
Then, we have the following result.

\begin{figure}[h]
\centering
\includegraphics[width=7truecm]{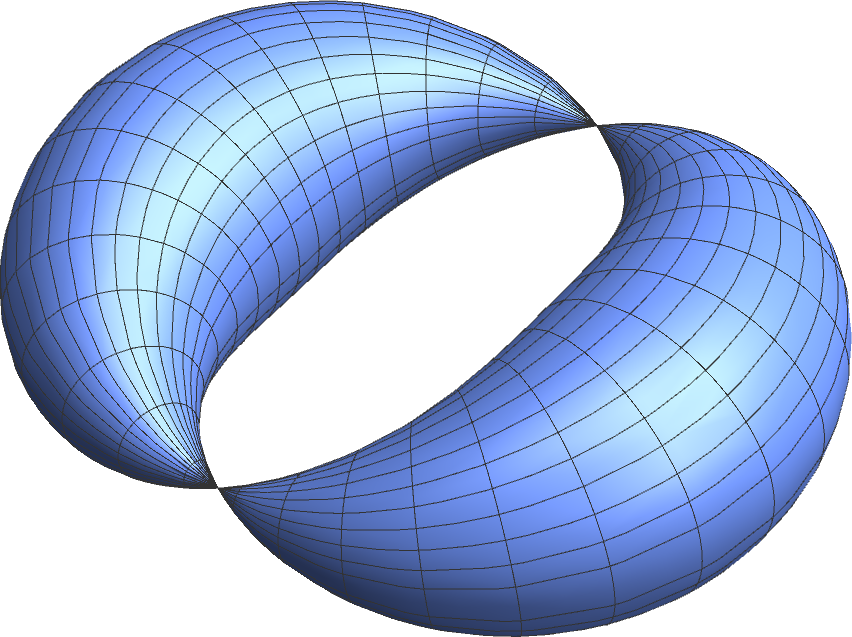}
\caption{A doubly pinched torus}
\label{fig:torus}
\end{figure}

\begin{theorem}\label{intro memo}
There exists a family $\{Z_c\}_{c\in [-1,-1/2]}$ of closed subsets of $S^2\times S^2$ such that for any $c\in[-1,-1/2]$
\begin{enumerate}
\item $Z_c$ is homeomorphic to $Z$ if $c\neq -1$,
\item $Z_c$ is non-displaceable from $Z_{c'}$ and from $L_d$ in $(S^2\times S^2,\omega_1)$ for any $c'\in[-1,-1/2]$ and any $d\in [-1,c]$,
\item $Z_c$ is displaceable from $L_d$ in $(S^2\times S^2,\omega_1)$ for any $d\in (c,-1/2]$.
\end{enumerate}
\end{theorem}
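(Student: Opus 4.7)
The strategy is to produce each $Z_c$ as a $\zeta_{S^2\times S^2}$-pseudoheavy fiber of a suitable moment map built from $(J_1,H^1)$ by a cut-off construction, then to derive the non-displaceability statement (2) from the general machinery of pseudoheaviness (Proposition \ref{not pseudoheavy}), while the topological identification (1) and the displaceability (3) come from direct geometric analysis of the integrable system.

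For each $c\in[-1,-1/2]$, the plan is to choose a smooth cut-off $f_c\colon\RR\to\RR_{\geq 0}$ supported near $[-1,c]$ and to consider
\[
\mathbb{A}_c:=\mathrm{span}\bigl(f_c(H^1)\,J_1,\,H^1\bigr)\subset C^\infty(S^2\times S^2).
\]
Since $\{J_1,H^1\}=0$, the bracket $\{f_c(H^1)\,J_1,H^1\}$ also vanishes, so $\mathbb{A}_c$ is a finite-dimensional Poisson-commutative subspace and Theorem \ref{existence of pseudoheavy} applies to yield a $\zeta_{S^2\times S^2}$-pseudoheavy fiber $Z_c$ of the associated moment map. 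Using the stem-theoretic refinement given by Theorem \ref{theorem:AVSC stem is shv}, one expects to pin down the relevant fiber value and to identify $Z_c$ as the union of the Lagrangian torus $L_c$ with a singular Lagrangian $2$-sphere lying inside $\{H^1\leq c\}$ and meeting $L_c$ at exactly two points. Up to homeomorphism this union is the doubly pinched torus $Z$ of Figure~\ref{fig:torus}, giving assertion (1) for $c\neq-1$; the case $c=-1$ is exceptional because $L_{-1}$ already degenerates to the antidiagonal $2$-sphere.

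Assertion (2) then follows at once from Proposition \ref{not pseudoheavy}: each Lagrangian torus $L_d$ with $d\in[-1,-1/2]$ is expected to be $\zeta_{S^2\times S^2}$-superheavy (by a stem argument based on Theorem \ref{theorem:AVSC stem is shv} together with standard facts about monotone Lagrangians in the coupled angular momenta), so the $\zeta_{S^2\times S^2}$-pseudoheavy subset $Z_c$ cannot be Hamiltonian displaced from $L_d$ with $d\in[-1,c]$, nor from any $Z_{c'}$, since each $Z_{c'}$ contains such a $\zeta_{S^2\times S^2}$-superheavy fiber. Assertion (3) is essentially set-theoretic: the construction forces $Z_c\subset\{H^1\leq c\}$, while $L_d\subset\{H^1=d\}\subset\{H^1>c\}$ for $d>c$, so $Z_c$ and $L_d$ are already disjoint, and the identity (or a trivial small perturbation to realise it as the time-one map of a Hamiltonian flow) witnesses the displacement.

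The main obstacle is identifying the homeomorphism type of $Z_c$. Theorem \ref{existence of pseudoheavy} only furnishes a pseudoheavy fiber with no a priori control on its shape, so extracting the doubly pinched torus structure requires combining Theorem \ref{theorem:AVSC stem is shv} with a hands-on analysis of the critical set of $\Phi_1^1$ in the strip $\{-1\leq H^1\leq c\}$ and of how the cut-off $f_c$ selects a particular singular Lagrangian fiber among the many level components of the modified moment map. A parallel technical input, proving $\zeta_{S^2\times S^2}$-superheaviness of every $L_d$ for $d\in[-1,-1/2]$, rests on the same stem-theoretic machinery underlying Theorem \ref{existence of pseudoheavy}.
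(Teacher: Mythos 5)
Your proposal has a fatal flaw at the step you treat as a ``parallel technical input'': the claim that every torus $L_d$, $d\in[-1,-1/2]$, is $\zeta_{S^2\times S^2}$-superheavy for the \emph{single} quasi-state coming from the fundamental class. This is impossible: the fibers $L_d$ are pairwise disjoint, every superheavy set is heavy (Theorem \ref{prop:shv is hv}~(1)), and a heavy set must intersect every superheavy set for the \emph{same} partial symplectic quasi-state (Theorem \ref{prop:shv is hv}~(3)); hence at most one of the $L_d$ can be superheavy with respect to any fixed $\zeta$. No stem argument can rescue this, since for $d\in(-1,-1/2)$ the fiber $L_d$ is not a stem of $(J_1,H^1)$ (the other fibers $L_{d'}$ are themselves non-displaceable). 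The paper circumvents exactly this obstruction by using a \emph{family} of quasi-states: the Entov--Polterovich quasi-state $\zeta_{-1}$ for which the antidiagonal $L_{-1}$ is superheavy, and for each $d\in(-1,-1/2]$ a bulk-deformed Fukaya--Oh--Ohta--Ono quasi-state $\zeta_d$ for which $L_d$ is superheavy; these are cited external inputs, not consequences of Theorem \ref{theorem:AVSC stem is shv}. Without them your derivation of assertion~(2) collapses, both for non-displaceability from $L_d$ and from $Z_{c'}$.

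The construction of $Z_c$ is also not carried out. Theorem \ref{existence of pseudoheavy} applied to your cut-off system $\mathbb{A}_c$ only yields \emph{some} pseudoheavy fiber, with no control over which fiber it is, its location in $\{H^1\leq c\}$, or its homeomorphism type; you acknowledge this as ``the main obstacle'' but do not resolve it, and both (1) and your disjointness argument for (3) rest on the unestablished identification of $Z_c$. The paper instead takes the \emph{explicit} fiber $Z_c=(\Phi_1^{s_c})^{-1}(0,-s_c)$ of the deformed system $\Phi_1^{s}=(J_1,H^{s})$ with $s_c=-\cos\bigl(\pi\Area_{\sigma}(D(1,c))\bigr)$: symplectic reduction at $J_1=0$ identifies it with the preimage of two meridians of the annulus together with the two points $(N,S),(S,N)$, which is visibly the doubly pinched torus, giving (1). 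Superheaviness (Theorem \ref{thm:between}) is proved by showing $Z_c$ is a $\zeta_d$-NPH-stem: all other fibers are either displaced from themselves by the explicit involution $\psi$ (Lemma \ref{prop:displace}) or displaced from the $\zeta_d$-superheavy $L_d$ by an area comparison in the reduced annulus lifted via Proposition \ref{prop:lift}, hence not $\zeta_d$-pseudoheavy, and Theorem \ref{theorem:AVSC stem is shv} applies. The same area-plus-lifting argument gives the genuine Hamiltonian displacement needed for (3) --- note that in the actual construction $Z_c$ need not be disjoint from $L_d$ for $d>c$, so ``displacement by the identity'' is not available even in principle.
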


We show Theorem \ref{intro memo} as a corollary of Theorem \ref{thm:between} in Section \ref{sec:proof of strange}.
In order to prove Theorem \ref{intro memo}, we use the following partial symplectic quasi-states.
For $c=-1$, Entov and Polterovich \cite{EP09} constructed a partial symplectic quasi-state $\zeta_{-1}$ on $(S^2\times S^2,\omega_1)$
such that the Lagrangian sphere $L_{-1}$ is $\zeta_{-1}$-superheavy.
For every $c\in (-1,-1/2]$, Fukaya, Oh, Ohta, and Ono \cite{FOOO} constructed a partial symplectic quasi-state $\zeta_c$ on $(S^2\times S^2,\omega_1)$
such that the Lagrangian torus $L_c$ is $\zeta_c$-superheavy (see also \cite{ElP,LZ} for the case $c=-1/2$).

Moreover, we deal with generalized coupled angular momenta (see \cite{HP,Ga,P}) in Section \ref{sec:CAM}.
We prove that some of them have at least two non-displaceable fibers (Corollary \ref{cor:FOOO-OU-EP}) using Proposition \ref{not pseudoheavy} and Theorem \ref{existence of pseudoheavy}.
Furthermore, we prove that another generalized coupled angular momentum has only one non-displaceable fiber (Theorem \ref{HP example}).


\section{Proof of Theorem \ref{existence of pseudoheavy}}\label{sec:proof}

In this section, we provide the definition of partial symplectic quasi-state and proofs of Proposition \ref{not pseudoheavy} and Theorem \ref{existence of pseudoheavy}.
Let $(M,\omega)$ be a closed symplectic manifold.
Let $\Ham(M,\omega)$ denote the group of Hamiltonian diffeomorphisms of $(M,\omega)$.

\subsection{Partial symplectic quasi-states and pseudoheaviness}

\begin{definition}\label{def:psqs}
A \textit{partial symplectic quasi-state} on $(M,\omega)$ is a functional $\zeta\colon C(M)\to\mathbb{R}$ satisfying the following conditions.
\begin{description}
	\item[Normalization] $\zeta(a)=a$ for any constant function $a$.
	\item[Stability] For any $H_1,H_2\in C(M)$
	\[
		\min_M(H_1-H_2)\leq\zeta(H_1)-\zeta(H_2)\leq\max_M(H_1-H_2).
	\]
	In particular, \textbf{Monotonicity} holds: $\zeta(H_1)\leq\zeta(H_2)$ if $H_1\leq H_2$.
	\item[Semi-homogeneity] $\zeta(sH)=s\zeta(H)$ for any $H\in C(M)$ and any $s>0$.
	\item[Hamiltonian Invariance] $\zeta(H\circ\phi)=\zeta(H)$ for any $H\in C(M)$ and any $\phi\in\Ham(M,\omega)$.
	\item[Vanishing] $\zeta(H)=0$ for any $H\in C(M)$ whose support is displaceable from itself.
	\item[Quasi-subadditivity] $\zeta(H_1+H_2)\leq\zeta(H_1)+\zeta(H_2)$ for any $H_1,H_2\in C^{\infty}(M)$ satisfying $\{H_1,H_2\}=0$.
\end{description}
\end{definition}

\begin{remark}
In this paper, we adopted the properties listed in \cite[Section 4.5]{PR} as the definition of partial symplectic quasi-state.
There are different definitions of partial symplectic quasi-state as in \cite[Section 4]{EP06} and \cite[Definition 13.3]{FOOO}.
One can confirm that our definition is more general than the latter.
In addition, we note that the quasi-subadditivity is called ``the triangle inequality" in \cite[Theorem 3.6]{EP09} and \cite[Definition 13.3]{FOOO}.
\end{remark}

First we prove Proposition \ref{not pseudoheavy}.

\begin{proof}[Proof of Proposition \ref{not pseudoheavy}]
Let $X$ be a $\zeta$-pseudoheavy subset of $M$.
Assume, on the contrary, that $X$ is displaceable either from itself or from a $\zeta$-superheavy subset $Y$.
Then, there exists an open neighborhood $U$ of $X$ that is displaceable either from itself or from $Y$.
If $U$ is displaceable from itself, then by the vanishing of $\zeta$, $\zeta(F)=0$ for any $F\in\mathcal{H}(U)$.
This contradicts the $\zeta$-pseudoheaviness of $X$.

If $U$ is displaceable from $Y$,
then we can choose $\phi\in\Ham(M,\omega)$ such that $\phi(U)\cap Y=\emptyset$.
Since $Y$ is $\zeta$-superheavy, for any $F\in\mathcal{H}\bigl(\phi(U)\bigr)$
\[
	\zeta(F)\leq\sup_{Y}F=0.
\]
By the Hamiltonian invariance of $\zeta$, it means that $\zeta(G)\leq 0$ for any $G\in\mathcal{H}(U)$.
This contradicts the $\zeta$-pseudoheaviness of $X$.
Therefore, $X$ is non-displaceable from itself and from every $\zeta$-superheavy subset.
\end{proof}


\subsection{Proof of Theorem \ref{existence of pseudoheavy}}

Let $\zeta\colon C(M)\to\mathbb{R}$ be a partial symplectic quasi-state on $(M,\omega)$.
We need the following proposition to prove Theorem \ref{theorem:stem is shv}.

\begin{proposition}[{\cite[Proposition 4.1]{EP09}}]\label{prop:hv iff}
Let $X$ be a closed subset of $M$.
\begin{enumerate}
\item $X$ is $\zeta$-heavy if and only if $\zeta(H)=0$ for any $H\in C(M)$ satisfying $H\leq 0$ and $H|_X\equiv 0$.
\item $X$ is $\zeta$-superheavy if and only if $\zeta(H)=0$ for any $H\in C(M)$ satisfying $H\geq 0$ and $H|_X\equiv 0$.
\end{enumerate}
\end{proposition}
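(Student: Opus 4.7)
The plan is to prove both equivalences by symmetric arguments, using only the normalization and stability axioms (from which monotonicity and invariance under additive constants follow). I treat part (1) in detail and note that part (2) will follow by reversing inequalities.

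For the forward direction of part (1), I would invoke the definition of $\zeta$-heaviness directly. Given $H\in C(M)$ with $H\leq 0$ and $H|_X\equiv 0$, heaviness yields $\zeta(H)\geq\inf_X H=0$, while the stability axiom applied to the pair $H$ and $0$ (combined with normalization) gives $\zeta(H)\leq\zeta(0)=0$. These two inequalities force $\zeta(H)=0$.

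For the converse, the key step is, given an arbitrary $F\in C(M)$, to construct a test function satisfying the hypothesis that is comparable to $F$. I would set $c=\inf_X F$ and consider the truncation $H=\min(F-c,0)$. On $X$ one has $F-c\geq 0$, hence $H|_X\equiv 0$; and globally $H\leq 0$. The hypothesis therefore gives $\zeta(H)=0$. Since $H\leq F-c$, monotonicity (itself a consequence of stability) yields $\zeta(H)\leq\zeta(F-c)$. Finally, applying the stability axiom to $F$ and $F-c$ shows that $\zeta(F-c)=\zeta(F)-c$, so we obtain $\zeta(F)\geq c=\inf_X F$, i.e.\ $X$ is $\zeta$-heavy.

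The proof of part (2) is entirely analogous: take $d=\sup_X F$ and use the truncation $H=\max(F-d,0)$, which lies in the nonnegative cone and vanishes on $X$; combine with $H\geq F-d$ and the identity $\zeta(F-d)=\zeta(F)-d$ to deduce $\zeta(F)\leq\sup_X F$. I do not anticipate a genuine obstacle here: the argument is essentially a soft interplay between stability and the truncation $\min(\,\cdot\,,0)$ or $\max(\,\cdot\,,0)$. The only small point that warrants an explicit verification is the translation identity $\zeta(F+a)=\zeta(F)+a$ for constants $a\in\mathbb{R}$, but this is a one-line consequence of the stability axiom.
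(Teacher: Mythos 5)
Your proposal is correct; note that the paper offers no proof of this statement at all, quoting it directly from Entov--Polterovich \cite[Proposition 4.1]{EP09}. Your argument---shift by $\inf_X F$ (resp.\ $\sup_X F$), truncate with $\min(\,\cdot\,,0)$ (resp.\ $\max(\,\cdot\,,0)$), and combine monotonicity with the translation identity $\zeta(F+a)=\zeta(F)+a$ coming from stability and normalization---is precisely the standard proof of this equivalence, so there is nothing to add.
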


Given a finite-dimensional Poisson-commutative subspace $\mathbb{A}$ of $C^{\infty}(M)$,
we recall that the moment map $\Phi\colon M\to\mathbb{A}^{\ast}$ is given by $F(x)=\langle\Phi(x),F\rangle$ for $x\in M$ and $F\in\mathbb{A}$.
We define \textit{NPH-stems} which generalize stems introduced in \cite{EP06}.

\begin{definition}\label{def:NPH-stem}
A closed subset $X$ of $M$ is called a \textit{$\zeta$-NPH-stem} (resp.\ \textit{stem})
if there exists a finite-dimensional Poisson-commutative subspace $\mathbb{A}$ of $C^{\infty}(M)$ satisfying the following conditions.
\begin{enumerate}
\item $X=\Phi^{-1}(p)$ for some $p\in\Phi(M)$.
\item Every non-trivial fiber of $\Phi$, other than $X$, is not $\zeta$-pseudoheavy (resp.\ is displaceable from itself).
\end{enumerate}
\end{definition}

Here NPH stands for ``non-pseudoheavy.''
By Proposition \ref{not pseudoheavy}, every stem is a $\zeta$-NPH-stem for any partial symplectic quasi-state $\zeta$.
A crucial property of stems is the following.

\begin{theorem}[{\cite[Theorem 1.8]{EP09}}]\label{theorem:stem is shv}
Every stem is $\zeta$-superheavy for any partial symplectic quasi-state $\zeta$.
\end{theorem}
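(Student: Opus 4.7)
The plan is to verify the criterion of Proposition \ref{prop:hv iff}(2): for the fiber $X = \Phi^{-1}(p)$ witnessing the stem property, it suffices to show $\zeta(H) = 0$ for every $H \in C(M)$ with $H \geq 0$ and $H|_X \equiv 0$. Since $H \geq 0$, monotonicity and normalization already give $\zeta(H) \geq 0$, so the substantive content is the upper bound $\zeta(H) \leq 0$, which I would establish by proving $\zeta(H) \leq \varepsilon$ for an arbitrary $\varepsilon > 0$.

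The first step is to localize around $p$. By continuity of $H$ and $\Phi$ together with compactness of $M$, I choose an open neighborhood $U$ of $p$ in $\mathbb{A}^{\ast}$ with $H < \varepsilon$ on $\Phi^{-1}(U)$. On the compact complement $\Phi(M) \setminus U$, every point $q$ satisfies $q \neq p$, so by the stem hypothesis $\Phi^{-1}(q)$ is displaceable from itself, hence contained in an open subset of $M$ that is displaceable from itself. A routine compactness-and-continuity argument then produces an open $V_q \subset \mathbb{A}^{\ast}$ with $\Phi^{-1}(V_q)$ displaceable, and extracting a finite subcover of $\Phi(M) \setminus U$ yields $V_{q_1},\dots,V_{q_k}$.

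Next I pick a smooth partition of unity $\{\rho_0,\rho_1,\dots,\rho_k\}$ on a neighborhood of $\Phi(M)$ in $\mathbb{A}^{\ast}$ subordinate to $\{U,V_{q_1},\dots,V_{q_k}\}$ and form the pullbacks $\tilde{\rho}_i = \rho_i \circ \Phi \in C^{\infty}(M)$. The essential observation is that these pullbacks Poisson-commute: expressing each $\rho_i$ as a smooth function of a basis of $\mathbb{A}$ and applying the chain rule for Poisson brackets together with $\{F_a,F_b\}=0$ for $F_a,F_b \in \mathbb{A}$ gives $\{\tilde{\rho}_i,\tilde{\rho}_j\}=0$ for all $i,j$. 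By construction $\tilde{\rho}_0$ is supported in $\Phi^{-1}(U) \subset \{H < \varepsilon\}$, while each $\tilde{\rho}_i$ with $i \geq 1$ is supported in the displaceable set $\Phi^{-1}(V_{q_i})$.

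The conclusion then assembles the axioms of $\zeta$. Setting $C = \max_M H$, stability applied to $H$ and $H(1-\tilde{\rho}_0)$ yields $\zeta(H) \leq \zeta\bigl(H(1-\tilde{\rho}_0)\bigr) + \varepsilon$, while the pointwise estimate $H(1-\tilde{\rho}_0) = H \sum_{i=1}^k \tilde{\rho}_i \leq C \sum_{i=1}^k \tilde{\rho}_i$ combined with monotonicity and semi-homogeneity gives $\zeta\bigl(H(1-\tilde{\rho}_0)\bigr) \leq C\,\zeta\bigl(\sum_{i=1}^k \tilde{\rho}_i\bigr)$. Quasi-subadditivity, legitimate precisely because the $\tilde{\rho}_i$ Poisson-commute, then bounds the latter by $C \sum_{i=1}^k \zeta(\tilde{\rho}_i)$, and each $\zeta(\tilde{\rho}_i)$ vanishes by the vanishing axiom applied to its displaceable support. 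Hence $\zeta(H) \leq \varepsilon$ for every $\varepsilon > 0$, completing the proof. I expect the main subtlety to lie in the Poisson-commutativity step, which is what forces the use of a \emph{smooth} partition of unity and crucially exploits that $\mathbb{A}$ is Poisson-commutative; the rest is a careful but routine bookkeeping of stability, monotonicity, semi-homogeneity, quasi-subadditivity, and vanishing applied to the decomposition afforded by the partition of unity.
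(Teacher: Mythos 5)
Your proof is correct and follows essentially the same strategy as the paper's argument (the proof of Theorem \ref{theorem:AVSC stem is shv}, which specializes to this statement): pull back a partition of unity on $\mathbb{A}^{\ast}$ via $\Phi$, use Poisson-commutativity of the pullbacks to invoke quasi-subadditivity, kill the terms over displaceable fibers by the vanishing axiom, and conclude via Proposition \ref{prop:hv iff}(2). The only cosmetic difference is that you handle the part near $p$ with a stability/$\varepsilon$ estimate and the constant bound $C\sum_{i\geq 1}\tilde{\rho}_i$, whereas the paper dominates $G$ by a pullback $\Phi^{\ast}H$ vanishing near $p$ and decomposes $\Phi^{\ast}H$ itself.
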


We generalize Theorem \ref{theorem:stem is shv} as follows.

\begin{theorem}\label{theorem:AVSC stem is shv}
Every $\zeta$-NPH-stem is $\zeta$-superheavy.
\end{theorem}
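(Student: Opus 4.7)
The plan is to apply Proposition~\ref{prop:hv iff}(2): it suffices to show $\zeta(H)=0$ for every $H\in C(M)$ with $H\geq 0$ and $H|_X\equiv 0$, where $X=\Phi^{-1}(p)$ is the given $\zeta$-NPH-stem. Monotonicity and normalization already give $\zeta(H)\geq 0$, so the content is to prove $\zeta(H)\leq 2\varepsilon$ for an arbitrary $\varepsilon>0$.

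First I would translate the NPH hypothesis into saturated language over the base. For each $y\in\Phi(M)\setminus\{p\}$, the non-pseudoheaviness of $\Phi^{-1}(y)$ supplies an open $U_y\subset M$ with $\zeta(F)\leq 0$ for every $F\in\mathcal{H}(U_y)$; since $\Phi^{-1}(y)$ is compact and $\Phi$ continuous, I may shrink to an open $V_y\subset\mathbb{A}^{\ast}$ with $\Phi^{-1}(V_y)\subset U_y$. At the point $p$, the hypothesis $H|_X\equiv 0$ combined with the compactness of $\{H\geq\varepsilon\}$ and its disjointness from $X$ shows that $V_0:=\mathbb{A}^{\ast}\setminus\Phi(\{H\geq\varepsilon\})$ is an open neighborhood of $p$ on whose preimage one has $H<\varepsilon$. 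Compactness of $\Phi(M)$ then yields a finite subcover $\{V_0,V_{y_1},\dots,V_{y_N}\}$ of $\Phi(M)$.

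The central technical step is to produce a decomposition into Poisson-commuting smooth pieces, as quasi-subadditivity requires. A naive decomposition $H=\sum(\rho_i\circ\Phi)H$ fails because the common factor $H$ breaks commutativity. The device instead is to dominate $H$ from above by a pullback $g\circ\Phi$, where $g\colon\mathbb{A}^{\ast}\to[0,\infty)$ is smooth, $g\circ\Phi\geq H$ on $M$, and $g\leq 2\varepsilon$ on a slightly shrunken sub-neighborhood of $p$; such a $g$ is built by setting $g\equiv\varepsilon$ close to $p$ (which exceeds the fiberwise maximum of $H$ there, by the defining property of $V_0$, possibly after a first shrinking of $V_0$) and interpolating smoothly up to a constant majorant of $H$ away from $V_0$. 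Then, with a smooth partition of unity $\rho_0,\rho_1,\dots,\rho_N$ subordinate to this cover and concentrated enough at $p$, the functions $g_i:=(\rho_i g)\circ\Phi$ are smooth, sum to $g\circ\Phi$, and pairwise Poisson-commute, since they are all smooth functions of $\Phi$ and $\mathbb{A}$ is Poisson-commutative.

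To conclude, I chain monotonicity and iterated quasi-subadditivity to obtain $\zeta(H)\leq\zeta(g\circ\Phi)\leq\sum_{i=0}^{N}\zeta(g_i)$. For $i\geq 1$, $g_i$ is supported in $\Phi^{-1}(V_{y_i})$, so $\zeta(g_i)\leq 0$ by the NPH choice of $V_{y_i}$; for $i=0$, the uniform bound $0\leq g_0\leq 2\varepsilon$ together with stability and $\zeta(0)=0$ gives $\zeta(g_0)\leq 2\varepsilon$. Hence $\zeta(H)\leq 2\varepsilon$, and letting $\varepsilon\to 0$ completes the argument. The main obstacle I anticipate is precisely the Poisson-commutativity constraint in quasi-subadditivity; circumventing it by dominating $H$ by a smooth pullback from $\mathbb{A}^{\ast}$, rather than trying to split $H$ itself on $M$, is the essential upgrade of the classical stem argument needed to handle the NPH-stem setting.
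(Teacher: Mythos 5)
Your proposal is correct in strategy and is essentially the paper's own argument: reduce to Proposition~\ref{prop:hv iff}(2), use non-pseudoheaviness of the other fibers to produce saturated neighborhoods $\Phi^{-1}(V_y)$ on which $\zeta\leq 0$ for all supported functions, and apply quasi-subadditivity to a partition-of-unity decomposition of a pullback from $\mathbb{A}^{\ast}$, whose summands Poisson-commute because they are functions of the commuting family $\mathbb{A}$. The only structural difference is that you fold the approximation into the dominating function (a smooth $g$ with $g\equiv\varepsilon$ near $p$), whereas the paper first proves the claim $\zeta(\Phi^{\ast}H)\leq 0$ for $H$ vanishing near $p$ and then dominates the test function by such a pullback; your $\varepsilon$-buffer version in fact makes explicit the approximation that the paper's final domination step treats tersely (a general $G\geq 0$ with $G|_X\equiv 0$ need not be dominated by a pullback vanishing near $p$ unless one first passes to $\max(G-\varepsilon,0)$ and invokes stability), so this is a point in your write-up's favor. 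One step does need tightening: the bound $0\leq g_0\leq 2\varepsilon$ does not follow from taking $\rho_0$ ``concentrated enough at $p$,'' because with your finite subcover $\{V_0,V_{y_1},\dots,V_{y_N}\}$ there may be points of $\Phi(M)$ lying in the transition collar of $g$ (where $g$ is close to the large constant majorant) that are covered by no $V_{y_i}$; at such points every $\rho_i$ with $i\geq 1$ vanishes, so $\rho_0=1$ and $\rho_0g$ is large. The fix is to order the choices correctly: first fix the neighborhood $V_0'$ of $p$ on which $g\equiv\varepsilon$, then choose finitely many $V_{y_1},\dots,V_{y_N}$ covering the compact set $\Phi(M)\setminus V_0'$ (possible since the $V_y$ cover $\Phi(M)\setminus\{p\}$), and take the partition of unity subordinate to $\{V_0',V_{y_1},\dots,V_{y_N}\}$, so that $\supp\rho_0\subset V_0'$ and hence $\rho_0g\leq\varepsilon$ on $\Phi(M)$; with this adjustment the rest of your argument goes through verbatim.
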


The proof of Theorem \ref{theorem:AVSC stem is shv} is almost parallel to that of \cite[Theorem 1.8]{EP09}, but we use the quasi-subadditivity instead of the partial quasi-additivity (see, for example, \cite[Section 4.6]{PR}).

\begin{proof}[Proof of Theorem \ref{theorem:AVSC stem is shv}]
Let $X=\Phi^{-1}(p)$, $p\in\Phi(M)$, be a $\zeta$-NPH-stem.
Take any function $H\colon\mathbb{A}^{\ast}\to\mathbb{R}$ which vanishes on an open neighborhood $V$ of $p$.
First we claim that $\zeta(\Phi^{\ast}H)\leq 0$.

Consider a finite open cover $\mathcal{U}=\{U_1,\ldots,U_d\}$ of $\Phi(M)\setminus V$ so that
each $\Phi^{-1}(U_i)$ contains no $\zeta$-pseudoheavy fiber.
Take a partition of unity $\{\rho_1,\ldots,\rho_d\}$ subordinated to $\mathcal{U}$.
Namely, $\sum_{i=1}^d\rho_i|_{\Phi(M)\setminus V}\equiv 1$ and $\supp(\rho_i)\subset U_i$ for any $i$.
Since $\supp\bigl(\Phi^{\ast}(\rho_iH)\bigr)\subset\Phi^{-1}(U_i)$, by the definition of pseudoheaviness,
\[
	\zeta(\Phi^{\ast}(\rho_iH)\bigr)\leq 0
\]
for any $i$. Since $\{\Phi^{\ast}(\rho_iH),\Phi^{\ast}(\rho_jH)\}=0$ for any $i$ and $j$,
by the quasi-subadditivity,
\[
	\zeta(\Phi^{\ast}H)=\zeta\left(\sum_{i=1}^d\Phi^{\ast}(\rho_iH)\right)\leq\sum_{i=1}^d\zeta\bigl(\Phi^{\ast}(\rho_iH)\bigr)\leq 0,
\]
and this completes the proof of the claim.

Now given any function $G\in C(M)$ satisfying $G\geq 0$ and $G|_X\equiv 0$,
one can find a function $H\colon\mathbb{A}^{\ast}\to\mathbb{R}$ and an open neighborhood $V$ of $p$ with $H|_V\equiv 0$ such that $G\leq\Phi^{\ast}H$.
By the normalization, the monotonicity and the above claim,
\[
	0=\zeta(0)\leq\zeta(G)\leq\zeta(\Phi^{\ast}H)\leq 0.
\]
Hence $\zeta(G)=0$.
By Proposition \ref{prop:hv iff} (2), $X$ is $\zeta$-superheavy.
\end{proof}

Now we are in a position to prove our main theorem (Theorem \ref{existence of pseudoheavy}).

\begin{proof}[Proof of Theorem \ref{existence of pseudoheavy}]
Arguing by contradiction, assume that every fiber of $\Phi$ is not $\zeta$-pseudoheavy.
Then, every fiber is a $\zeta$-NPH-stem.
Hence, by Theorem \ref{theorem:AVSC stem is shv}, every fiber is $\zeta$-superheavy.
Since all fibers are mutually disjoint, it contradicts Theorem \ref{prop:shv is hv} (1) and (3).
\end{proof}

\begin{remark}
In the proof of Theorem \ref{existence of pseudoheavy}, we do not use the vanishing property of $\zeta$.
\end{remark}


\section{Examples of pseudoheavy, but not heavy fibers}\label{psd not heavy}\label{sec:example}

Here we provide examples of moment maps with no heavy fiber.

\begin{proposition}\label{borman and fooo}
Let $(M,\omega)$ be a closed symplectic manifold and $\mathbb{A}$ a finite-dimensional Poisson-commutative subspace of $C^{\infty}(M)$.
Let $\zeta_1,\zeta_2\colon C(M)\to\RR$ be partial symplectic quasi-states on $(M,\omega)$.
Assume that there exist $y_1,y_2\in\Phi(M)$ such that $y_1\neq y_2$ and $\Phi^{-1}(y_i)$ is $\zeta_i$-superheavy for $i=1,2$.
Then, the functional $\frac{1}{2}(\zeta_1+\zeta_2)\colon C(M)\to\RR$ is also a partial symplectic quasi-state on $(M,\omega)$ and
\begin{enumerate}
\item $\Phi^{-1}(y_1)\cup\Phi^{-1}(y_2)$ is $\frac{1}{2}(\zeta_1+\zeta_2)$-superheavy.
\item $\Phi^{-1}(y_1)$ and $\Phi^{-1}(y_2)$ are $\frac{1}{2}(\zeta_1+\zeta_2)$-pseudoheavy.
\item $\Phi$ does not admit any $\frac{1}{2}(\zeta_1+\zeta_2)$-heavy fiber.
\end{enumerate}
\end{proposition}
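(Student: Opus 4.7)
The plan is to verify the four claims almost independently, relying only on the axioms of Definition~\ref{def:psqs} together with the comparison principles from Theorem~\ref{prop:shv is hv}.

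First, I would check that $\tfrac{1}{2}(\zeta_1+\zeta_2)$ is itself a partial symplectic quasi-state by inspecting Definition~\ref{def:psqs} axiom by axiom. Normalization, stability, semi-homogeneity, Hamiltonian invariance, vanishing, and quasi-subadditivity all respect pointwise convex combinations, so this step is purely mechanical. For (1), the superheaviness of $\Phi^{-1}(y_1)\cup\Phi^{-1}(y_2)$ then follows immediately: for any $H\in C(M)$, averaging $\zeta_i(H)\leq\sup_{\Phi^{-1}(y_i)}H\leq \sup_{\Phi^{-1}(y_1)\cup\Phi^{-1}(y_2)}H$ over $i=1,2$ yields the required inequality.

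For (2), the key ingredient is that $y_1\neq y_2$, so the fibers $\Phi^{-1}(y_1)$ and $\Phi^{-1}(y_2)$ are disjoint closed subsets of the normal space $M$. Given any open neighborhood $U$ of $\Phi^{-1}(y_1)$, I would first shrink $U$ so that $U\cap\Phi^{-1}(y_2)=\emptyset$, and then apply Urysohn's lemma to produce $F\in\mathcal{H}(U)$ with $0\leq F\leq 1$ and $F\equiv 1$ on $\Phi^{-1}(y_1)$. Since $\Phi^{-1}(y_1)$ is $\zeta_1$-superheavy, hence $\zeta_1$-heavy by Theorem~\ref{prop:shv is hv} (1), we obtain $\zeta_1(F)\geq 1$; the monotonicity of $\zeta_2$ together with normalization gives $\zeta_2(F)\geq \zeta_2(0)=0$. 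Thus $\tfrac{1}{2}(\zeta_1(F)+\zeta_2(F))\geq\tfrac{1}{2}>0$, establishing pseudoheaviness of $\Phi^{-1}(y_1)$; the case of $\Phi^{-1}(y_2)$ is symmetric.

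For (3), I argue by contradiction. If some fiber $\Phi^{-1}(y_0)$ were $\tfrac{1}{2}(\zeta_1+\zeta_2)$-heavy, then Theorem~\ref{prop:shv is hv} (3) applied to the superheavy set from (1) would force $\Phi^{-1}(y_0)\cap\bigl(\Phi^{-1}(y_1)\cup\Phi^{-1}(y_2)\bigr)\neq\emptyset$, and by disjointness of distinct fibers we would have $y_0\in\{y_1,y_2\}$, say $y_0=y_1$. Using Urysohn's lemma again, I would construct $H\in C(M)$ with $-1\leq H\leq 0$, $H\equiv 0$ on $\Phi^{-1}(y_1)$, and $H\equiv -1$ on $\Phi^{-1}(y_2)$. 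Heaviness of $\Phi^{-1}(y_1)$ gives $\tfrac{1}{2}(\zeta_1(H)+\zeta_2(H))\geq 0$, but monotonicity of $\zeta_1$ (applied to $H\leq 0$) yields $\zeta_1(H)\leq 0$, while $\zeta_2$-superheaviness of $\Phi^{-1}(y_2)$ yields $\zeta_2(H)\leq -1$, producing the contradiction $\tfrac{1}{2}(\zeta_1(H)+\zeta_2(H))\leq -\tfrac{1}{2}$. The only genuine idea in the proof is the use of fiber disjointness via separating Urysohn functions; everything else is a direct application of the quasi-state axioms, and I do not expect any serious obstacle.
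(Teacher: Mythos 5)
Your proof is correct, but it follows a different route from the paper's. The paper factors the whole statement through an auxiliary result (Lemma~\ref{general fooo}): it first shows that the averaged functional satisfies the exact value formula $\tfrac12(\zeta_1+\zeta_2)(f\circ\Phi)=\tfrac12\bigl(f(y_1)+f(y_2)\bigr)$ for every continuous $f$ on $\mathbb{A}^{\ast}$ (using that each $\Phi^{-1}(y_i)$, being $\zeta_i$-superheavy, is also $\zeta_i$-heavy, so $\zeta_i(f\circ\Phi)=f(y_i)$), and then deduces (1)--(3) purely from that formula: (1) via a dominating pullback $\underline{H}\circ\Phi\geq H$, (2) via bump functions on $\mathbb{A}^{\ast}$ pulled back by $\Phi$, and (3) by exhibiting $G=\underline{G}\circ\Phi$ with $\underline{G}(y_1)=1$, $\underline{G}(y_2)=0$, so that $\zeta(G)=\tfrac12<1=\inf_{\Phi^{-1}(y_1)}G$. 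You instead work directly on $M$ with separating Urysohn functions and use the individual (super)heaviness of each fiber for its own $\zeta_i$: your averaging argument for (1) is in fact shorter than the paper's, your (2) uses $\zeta_1(F)\geq 1$ and $\zeta_2(F)\geq 0$, and your (3) runs by contradiction, reducing to $y_0\in\{y_1,y_2\}$ via Theorem~\ref{prop:shv is hv}~(3) and then contrasting heaviness of the average with $\zeta_1(H)\leq 0$ and $\zeta_2(H)\leq -1$. All of these steps are valid (only make sure your Urysohn function is genuinely \emph{supported} in $U$, i.e.\ cut off inside a slightly smaller neighborhood whose closure lies in $U$, so that it belongs to $\mathcal{H}(U)$). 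What the paper's detour buys is reusability: Lemma~\ref{general fooo} is applied verbatim to the Py quasi-state $\zeta_P$ on $\Sigma_2$ (Proposition~\ref{py pseudoheavy}), where there is no decomposition into two quasi-states, only the value formula from Py's theorem; your argument is more self-contained and elementary for the present proposition, but it would not cover that second application.
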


For examples of $(M,\omega)$, $\zeta_1,\zeta_2$ and $\Phi$ satisfying the assumption listed in Proposition \ref{borman and fooo}, see
\cite{FOOO}, \cite[Theorem 1.2]{B} and \cite{V18}.

Let $\Sigma_2$ be a closed Riemann surface of genus 2 with an area form $\omega$.
Rosenberg \cite{R} constructed a functional $\zeta_P\colon C(\Sigma_2)\to\RR$ from Py's Calabi quasi-morphism defined in \cite{Py6}
and proved that $\zeta_P$ is a partial symplectic quasi-state on $(\Sigma_2,\omega)$.

Let $F_P\colon\Sigma_2\to\RR$ be a generic Morse function with exactly six critical points as shown in \textsc{Figure} \ref{fig:Sigma_2}.
Let $p_1,\ldots,p_6$ be the critical points of $F_P$ such that $c_1<\cdots<c_6$, where $c_i=F_P(p_i)$ for $i=1,\ldots,6$.

\begin{figure}[h]
\centering
\includegraphics[width=7truecm]{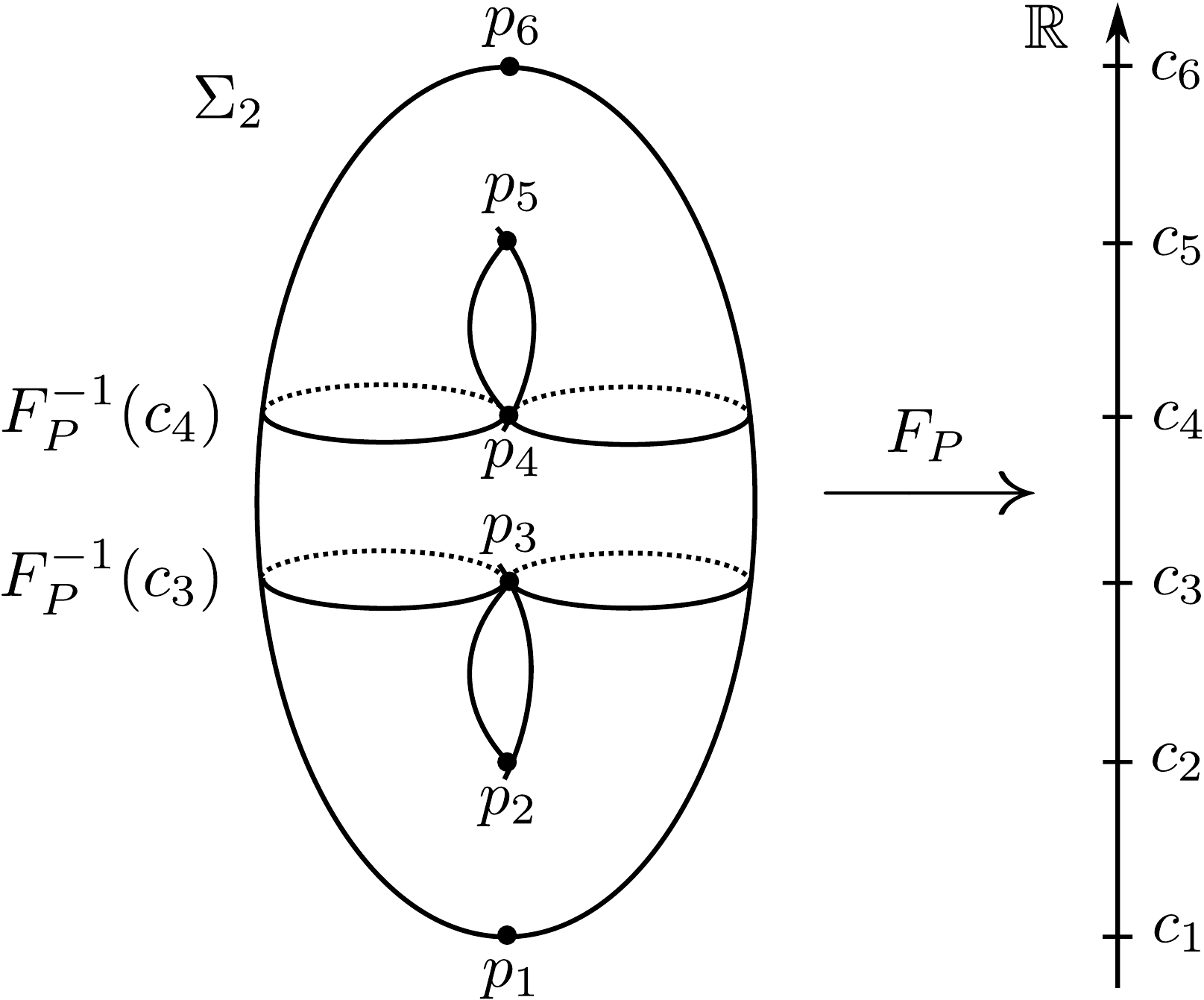}
\caption{A generic Morse function on $\Sigma_2$ with exactly six critical points}
\label{fig:Sigma_2}
\end{figure}

\begin{proposition}\label{py pseudoheavy}
Let $\zeta_P$ as above.
Then,
\begin{enumerate}
	\item $F_P^{-1}(c_3)\cup F_P^{-1}(c_4)$ is $\zeta_P$-superheavy.
	\item $F_P^{-1}(c_3)$ and $F_P^{-1}(c_4)$ are $\zeta_P$-pseudoheavy.
	\item $F_P$ does not admit any $\zeta_P$-heavy fiber.
\end{enumerate}
\end{proposition}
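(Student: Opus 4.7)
The plan is to establish (1), (2), and (3) sequentially: (1) via the NPH-stem criterion (Theorem \ref{theorem:AVSC stem is shv}), (2) via the main theorem (Theorem \ref{existence of pseudoheavy}) combined with an elementary displaceability analysis and a symmetry of $F_P$, and (3) by a contradiction argument that exploits a sign-reversal property of $\zeta_P$ inherited from the homogeneity of Py's quasi-morphism.

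For (1), I take $G=(F_P-c_3)(F_P-c_4)\in C^{\infty}(\Sigma_2)$ and the one-dimensional Poisson-commutative subspace $\mathbb{A}=\RR\cdot G$, whose moment map is $G$ itself. The zero fiber is exactly $G^{-1}(0)=F_P^{-1}(c_3)\cup F_P^{-1}(c_4)$, and every other fiber is a disjoint union of fibers of $F_P$ at values in $\{c_1,c_2,c_5,c_6\}$ or at regular values. Each such component admits a Hamiltonian-displaceable open neighborhood: regular level sets are finite unions of embedded circles whose thin tubular neighborhoods can be shifted off themselves by an area comparison; $F_P^{-1}(c_1)$ and $F_P^{-1}(c_6)$ are single points; and $F_P^{-1}(c_2)$ and $F_P^{-1}(c_5)$ are figure-eights bounding embedded disks in $\Sigma_2$. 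By the Vanishing axiom none of these fibers is $\zeta_P$-pseudoheavy, so $G^{-1}(0)$ is a $\zeta_P$-NPH-stem, and Theorem \ref{theorem:AVSC stem is shv} yields (1).

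For (2), apply Theorem \ref{existence of pseudoheavy} to $\mathbb{A}_0=\RR\cdot F_P$ to produce some $c_{\ast}$ with $F_P^{-1}(c_{\ast})$ $\zeta_P$-pseudoheavy; by the same displaceability analysis as above, $c_{\ast}\in\{c_3,c_4\}$. To upgrade this to simultaneous pseudoheaviness of both fibers, I exploit the manifest $\ZZ/2$-symmetry of the picture in \textsc{Figure} \ref{fig:Sigma_2}: a symplectic involution $\sigma$ of $(\Sigma_2,\omega)$ that interchanges the two handles and satisfies $F_P\circ\sigma=c_3+c_4-F_P$. Choosing Py's hyperbolic structure to be $\sigma$-invariant forces $\mu_P$, and hence $\zeta_P$, to be $\sigma$-invariant, so pseudoheaviness of $F_P^{-1}(c_{\ast})$ transfers automatically to $\sigma(F_P^{-1}(c_{\ast}))$, which is the other fiber.

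For (3), argue by contradiction. If $F_P^{-1}(c)$ is $\zeta_P$-heavy, then by Theorem \ref{prop:shv is hv}(3) it must meet the superheavy union from (1), so $c\in\{c_3,c_4\}$; assume $c=c_3$. By Proposition \ref{prop:hv iff}(1) it suffices to exhibit $H\leq 0$ with $H|_{F_P^{-1}(c_3)}\equiv 0$ and $\zeta_P(H)<0$. Take a bump function $F\geq 0$ supported in a neighborhood $V$ of $F_P^{-1}(c_4)$ disjoint from $F_P^{-1}(c_3)$ with $F\equiv 1$ on $F_P^{-1}(c_4)$, and set $H=-F$; by (2) we have $\zeta_P(F)>0$. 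The hard step is to upgrade the quasi-subadditivity bound $\zeta_P(-F)\geq -\zeta_P(F)$ (which follows from $\zeta_P(F+(-F))=\zeta_P(0)=0\leq\zeta_P(F)+\zeta_P(-F)$) to the exact identity $\zeta_P(-F)=-\zeta_P(F)$. I would derive this identity from the homogeneity of Py's quasi-morphism $\mu_P$, which through Rosenberg's limit construction yields the sign-reversal on functions; combined with $\zeta_P(F)>0$, this gives $\zeta_P(-F)<0$ and the desired contradiction. Establishing the sign-reversal rigorously within Rosenberg's partial-quasi-state framework is the main technical obstacle, since it is not formal from the axioms of a partial symplectic quasi-state and requires a careful unpacking of the construction of $\zeta_P$ from $\mu_P$.
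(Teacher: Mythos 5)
Your proposal bypasses the paper's key input, namely Theorem \ref{py calculate} (Rosenberg's form of Py's theorem, giving $\zeta_P(H)=\tfrac12\bigl(H(p_3)+H(p_4)\bigr)$ for all $H$ Poisson-commuting with $F_P$), and this is exactly where it breaks. The central false step is the displaceability claim underlying (1) and reused in (2): the nonzero fibers of $G=(F_P-c_3)(F_P-c_4)$ are \emph{not} displaceable, because for regular values in $(c_2,c_3)\cup(c_3,c_4)\cup(c_4,c_5)$ the level sets of $F_P$ are homotopically essential circles (circles going around a handle, and the separating ``waist'' circle between the two handles), and such circles cannot be shifted off themselves by an area comparison in a thin tube --- that argument only works inside a displaceable disk. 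For instance, if $\gamma$ is a waist circle separating $\Sigma_2$ into two one-holed tori $T_1,T_2$, any $\varphi$ isotopic to the identity with $\varphi(\gamma)\cap\gamma=\emptyset$ would have to map one of the $T_i$ either into the interior of the other (impossible since $\varphi$ acts trivially on $H_1(\Sigma_2)$) or onto a region of strictly different area (impossible for an area-preserving map). Hence $G^{-1}(0)$ is not a stem, and to show it is a $\zeta_P$-NPH-stem you would have to prove that these essential fibers are not $\zeta_P$-pseudoheavy; the Vanishing axiom gives nothing here, and the only available tool is precisely the computation $\zeta_P(f\circ F_P)=\tfrac12\bigl(f(c_3)+f(c_4)\bigr)$ that you never invoke. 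The same gap invalidates the step in (2) where you conclude $c_\ast\in\{c_3,c_4\}$ from Theorem \ref{existence of pseudoheavy}.

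The remaining steps also contain unproven or unavailable ingredients. In (2), a generic $F_P$ with distinct critical values $c_1<\dots<c_6$ admits no symplectic involution $\sigma$ with $F_P\circ\sigma=c_3+c_4-F_P$ (this identity forces non-generic relations among the $c_i$), and even if one arranged such a symmetry, the $\sigma$-invariance of $\zeta_P$ is asserted, not established. In (3), the identity $\zeta_P(-F)=-\zeta_P(F)$ does not follow from the partial symplectic quasi-state axioms, and you acknowledge you cannot derive it; so (3) is not proved. The paper's actual proof needs none of this: it applies Theorem \ref{py calculate} to $H=f\circ F_P$, extends to continuous $f$ by stability, and then Lemma \ref{general fooo} delivers (1), (2) and (3) at once --- in particular non-heaviness in (3) comes from $\zeta_P(\underline{G}\circ F_P)=\tfrac12<1=\inf_{F_P^{-1}(c_3)}\underline{G}\circ F_P$ for $\underline{G}$ with $\underline{G}(c_3)=1$, $\underline{G}(c_4)=0$, with no homogeneity or sign-reversal property required. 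To repair your argument you would in any case have to import Theorem \ref{py calculate}, at which point the paper's route is both shorter and complete.
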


To prove Propositions \ref{borman and fooo} and \ref{py pseudoheavy}, we use the following lemma.

\begin{lemma}\label{general fooo}
Let $(M,\omega)$ be a closed symplectic manifold
and $\mathbb{A}$ a finite-dimensional Poisson-commutative subspace of $C^{\infty}(M)$.
Let $\zeta\colon C(M)\to\RR$ be a partial symplectic quasi-state on $(M,\omega)$.
Assume that there exist $y_1,y_2\in\Phi(M)$ such that $y_1\neq y_2$
and $\zeta(f\circ\Phi)=\frac{1}{2}\bigl(f(y_1)+f(y_2)\bigr)$ for any continuous function $f\in C(\mathbb{A}^{\ast})$.
Then, 
\begin{enumerate}
\item $\Phi^{-1}(y_1)\cup\Phi^{-1}(y_2)$ is $\zeta$-superheavy.
\item $\Phi^{-1}(y_1)$ and $\Phi^{-1}(y_2)$ are $\zeta$-pseudoheavy.
\item $\Phi$ does not admit any $\zeta$-heavy fiber.
\end{enumerate}
\end{lemma}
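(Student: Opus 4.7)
The plan is to exploit the hypothesis $\zeta(f\circ\Phi)=\tfrac12\bigl(f(y_1)+f(y_2)\bigr)$, which says that $\zeta$ restricted to $\Phi$-pullbacks is the average of the evaluation functionals at $y_1$ and $y_2$. Combined with the stability of $\zeta$ and the characterization of (super)heaviness in Proposition \ref{prop:hv iff}, each of the three claims reduces to constructing a single explicit continuous test function on the finite-dimensional target $\mathbb{A}^{\ast}$.

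For (1), I would invoke Proposition \ref{prop:hv iff}(2) and reduce to showing $\zeta(G)=0$ for every $G\in C(M)$ with $G\geq 0$ and $G|_{\Phi^{-1}(y_1)\cup\Phi^{-1}(y_2)}\equiv 0$. Since $M$ is compact, $\Phi$ is a closed map, so for any $\varepsilon>0$ I can produce open neighborhoods $V_1,V_2\subset\mathbb{A}^{\ast}$ of $y_1,y_2$ such that $G<\varepsilon$ on $\Phi^{-1}(V_1\cup V_2)$. A Urysohn/bump construction on $\mathbb{A}^{\ast}\cong\mathbb{R}^n$ then yields a continuous $f\colon\mathbb{A}^{\ast}\to\mathbb{R}$ with $f(y_i)=\varepsilon$, $f\geq\varepsilon$ everywhere, and $f\equiv\max_M G$ outside $V_1\cup V_2$, so that $G\leq f\circ\Phi$ pointwise. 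Monotonicity of $\zeta$ and the hypothesis give $0\leq\zeta(G)\leq\zeta(f\circ\Phi)=\varepsilon$, and letting $\varepsilon\to 0$ closes (1).

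For (2), fix an open neighborhood $U$ of $\Phi^{-1}(y_1)$. Because $\Phi$ is closed, $W:=\mathbb{A}^{\ast}\setminus\Phi(M\setminus U)$ is an open neighborhood of $y_1$ with $\Phi^{-1}(W)\subset U$. Choose a continuous bump $g\colon\mathbb{A}^{\ast}\to\mathbb{R}$ with $g(y_1)=1$ and $\supp(g)\subset W$; then $g(y_2)=0$ automatically, whether or not $y_2\in W$. The function $F:=g\circ\Phi$ is supported in $\Phi^{-1}(\supp g)\subset U$, so $F\in\mathcal{H}(U)$, and $\zeta(F)=\tfrac12\bigl(g(y_1)+g(y_2)\bigr)=\tfrac12>0$; the case of $\Phi^{-1}(y_2)$ is symmetric. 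For (3), suppose for contradiction that some $\Phi^{-1}(y_0)$ is $\zeta$-heavy. Using Tietze, pick a continuous $f\colon\mathbb{A}^{\ast}\to\mathbb{R}$ with $f(y_0)=1$ and $f(y_j)=-1$ for each $j\in\{1,2\}$ with $y_j\neq y_0$. In every case $f(y_1)+f(y_2)\leq 0$, so the hypothesis gives $\zeta(f\circ\Phi)\leq 0$, while heaviness gives $\zeta(f\circ\Phi)\geq\inf_{\Phi^{-1}(y_0)}(f\circ\Phi)=f(y_0)=1$, which is the desired contradiction.

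The only step demanding real care is the domination in (1): converting the pointwise vanishing of $G$ on two compact fibers into a uniform bound $G\leq f\circ\Phi$ by a continuous function pulled back from the base. This is precisely where closedness of $\Phi$ and the finite dimensionality of $\mathbb{A}^{\ast}$ are used; once the majorant $f$ is in hand, the remainder of the proof is a direct consequence of the hypothesized averaging formula for $\zeta$.
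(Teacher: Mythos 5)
Your argument is correct and follows essentially the same strategy as the paper: dominate the relevant test functions by pullbacks $f\circ\Phi$ and apply the averaging identity $\zeta(f\circ\Phi)=\frac12\bigl(f(y_1)+f(y_2)\bigr)$ together with monotonicity. The minor differences are that in (1) the paper verifies the definition of superheaviness directly, choosing for each $H\in C(M)$ a majorant $\underline{H}\circ\Phi\geq H$ with $\max\{\underline{H}(y_1),\underline{H}(y_2)\}\leq\sup_{\Phi^{-1}(y_1)\cup\Phi^{-1}(y_2)}H$, whereas you go through Proposition \ref{prop:hv iff}(2) and an $\varepsilon$-limit (your version spells out the compactness/closedness argument the paper leaves implicit), and in (3) your single Tietze function with value $1$ at $y_0$ and $-1$ at the remaining points of $\{y_1,y_2\}$ treats all fibers at once, while the paper handles the fibers over $y\notin\{y_1,y_2\}$ via disjointness from the superheavy set $\Phi^{-1}(y_1)\cup\Phi^{-1}(y_2)$ and Theorem \ref{prop:shv is hv}(3); your route is marginally more self-contained. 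The one misstep is in (2): the claim that $g(y_2)=0$ ``automatically'' is false when $U$ is large (e.g.\ $U=M$), since then $W=\mathbb{A}^{\ast}\setminus\Phi(M\setminus U)$ may well contain $y_2$. This is harmless---for a nonnegative bump one still gets $\zeta(g\circ\Phi)=\frac12\bigl(1+g(y_2)\bigr)>0$, or one can simply shrink $W$ so that $y_2\notin W$, which is exactly what the paper does---but the justification as written should be repaired.
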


\begin{proof}
We fix an isomorphism $\mathbb{A}^{\ast}\cong\RR^k$ for some $k$.
For a continuous function $H\colon M\to\RR$, take a continuous function $\underline{H}\colon \RR^k\to\RR$ such that $H\leq\underline{H}\circ\Phi$ and
\[
	\max\{\underline{H}(y_1),\underline{H}(y_2)\}\leq\sup_{\Phi^{-1}(y_1)\cup\Phi^{-1}(y_2)}H.
\]
Then, by the monotonicity of $\zeta$ and the assumption,
\[
	\zeta(H)\leq\zeta(\underline{H}\circ\Phi)%
	=\frac{1}{2}\bigl(\underline{H}(y_1)+\underline{H}(y_2)\bigr)%
	\leq\sup_{\Phi^{-1}(y_1)\cup\Phi^{-1}(y_2)}H.
\]
Since $H$ is arbitrary, we complete the proof of (1).

We show that $\Phi^{-1}(y_1)$ is $\zeta$-pseudoheavy.
For any open neighborhood $U$ of $\Phi^{-1}(y_1)$ choose an open neighborhood $V$ of $y_1$ such that $y_2\notin V$ and $\Phi^{-1}(V)\subset U$.
Take a function $f\in\mathcal{H}(V)$ such that $f(y_1)>0$.
Then,
\[
	\zeta(f\circ\Phi)=\frac 12f(y_1)>0.
\]
Since $f\circ\Phi\in\mathcal{H}(U)$, $\Phi^{-1}(y_1)$ is $\zeta$-pseudoheavy.
Similarly, we can prove that $\Phi^{-1}(y_2)$ is also $\zeta$-pseudoheavy.
This completes the proof of (2).

Let $y\in\Phi(M)\setminus\{y_1,y_2\}$.
Since $\Phi^{-1}(y)$ is disjoint from the $\zeta$-superheavy subset $\Phi^{-1}(y_1)\cup\Phi^{-1}(y_2)$,
by Theorem \ref{prop:shv is hv} (3),
$\Phi^{-1}(y)$ is not $\zeta$-heavy.
We show that $\Phi^{-1}(y_1)$ is not $\zeta$-heavy.
Since $y_1\neq y_2$, we can choose a function $\underline{G}\in C(\RR^k)$ such that $\underline{G}(y_1)=1$ and $\underline{G}(y_2)=0$.
Set $G=\underline{G}\circ\Phi\in C(M)$.
Then,
\[
	\zeta(G)
	=\frac 12\bigl(\underline{G}(y_1)+\underline{G}(y_2)\bigr)
	=\frac 12<1=\inf_{\Phi^{-1}(y_1)}G.
\]
Hence $\Phi^{-1}(y_1)$ is not $\zeta$-heavy.
Similarly, we can prove that $\Phi^{-1}(y_2)$ is also not $\zeta$-heavy.
This completes the proof of Lemma \ref{general fooo}.
\end{proof}

\begin{proof}[Proof of Proposition \ref{borman and fooo}]
To confirm that $\frac 12(\zeta_1+\zeta_2)$ is a partial symplectic quasi-state on $(M,\omega)$,
we only check the stability since other properties follow from the definition.
Since $\zeta_i$ ($i=1,2$) satisfies the stability,
for any $H_1,H_2\in C(M)$
\[
	\min_M(H_1-H_2)\leq\zeta_i(H_1)-\zeta_i(H_2)\leq\max_M(H_1-H_2).
\]
By summing up with $i=1,2$ and dividing by 2,
\[
	\min_M(H_1-H_2)\leq\frac 12\bigl(\zeta_1(H_1)+\zeta_2(H_1)\bigr)-\frac 12\bigl(\zeta_1(H_2)+\zeta_2(H_2)\bigr)\leq\max_M(H_1-H_2).
\]
Hence $\frac 12(\zeta_1+\zeta_2)$ also satisfies the stability.

Now we claim that for any continuous function $f\in C(\mathbb{A}^{\ast})$
\[
	\frac 12(\zeta_1+\zeta_2)(f\circ\Phi)=\frac 12\bigl(f(y_1)+f(y_2)\bigr).
\]
Indeed, since $\Phi^{-1}(y_i)$ ($i=1,2$) is $\zeta_i$-superheavy,
by Theorem \ref{prop:shv is hv} (1),
\[
	f(y_i)=\inf_{\Phi^{-1}(y_i)}f\circ\Phi\leq\zeta_i(f\circ\Phi)\leq\sup_{\Phi^{-1}(y_i)}f\circ\Phi=f(y_i).
\]
Thus, $\zeta_i(f\circ\Phi)=f(y_i)$ and this shows the claim.
Hence Lemma \ref{general fooo} yields Proposition \ref{borman and fooo}.
\end{proof}

To prove Proposition \ref{py pseudoheavy}, we use the following theorem which is a special case of Py's theorem.

\begin{theorem}[{A special case of \cite[Th\'eor\`eme 2]{Py6}, see also \cite[Theorem 4.4]{R}}]\label{py calculate}
Let $(\Sigma_2,\omega)$, $\zeta_P$ and $F_P$ as above.
Then,
$\zeta_P(H)=\frac{1}{2}\bigl(H(p_3)+H(p_4)\bigr)$
for any smooth function $H\colon\Sigma_2\to\RR$ with $\{H,F_P\}=0$.
\end{theorem}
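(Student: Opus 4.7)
The plan is to derive Theorem~\ref{py calculate} by specializing Py's formula for his Calabi quasi-morphism on autonomous Hamiltonians from \cite{Py6}, translated into the language of partial symplectic quasi-states via Rosenberg's construction \cite{R}. First I would exploit the hypothesis $\{H,F_P\}=0$: since $H$ is preserved by the Hamiltonian flow of $F_P$, it is constant on every connected component of every level set of $F_P$. By continuity, $H$ therefore takes the constant values $H(p_3)$ and $H(p_4)$ on the singular level sets $F_P^{-1}(c_3)$ and $F_P^{-1}(c_4)$, which, from the configuration shown in \textsc{Figure}~\ref{fig:Sigma_2}, are the figure-eight curves through the saddles $p_3$ and $p_4$. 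Thus $H$ descends to a continuous function on the Reeb graph of $F_P$, and the desired identity reduces to a computation on that graph.

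Next I would highlight the special role of $p_3$ and $p_4$: these are the two ``essential'' saddles whose level sets witness the genus-two topology of $\Sigma_2$ by separating it into the two symmetric handle regions, while the remaining critical points only bound disks containing the extrema. I would then invoke Py's computation on autonomous $F_P$-invariant functions, which expresses his quasi-morphism as a weighted combination of the values of $H$ at the essential saddles, with weights coming from a Maslov-index computation on the universal cover of a configuration space of $\Sigma_2$. The central involution of $\Sigma_2$ exchanging the two handles forces equal weights on $p_3$ and $p_4$, and the normalization $\zeta_P(c)=c$ on constants pins these weights to $\frac{1}{2}$ each, yielding the stated formula.

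The hard part will be this last step. The elementary axioms of a partial symplectic quasi-state determine $\zeta_P(H)$ only within the range $[\min_M H,\max_M H]$ a priori, so extracting the exact coefficient $\frac{1}{2}$ requires the specific structure of Py's Maslov-index construction together with careful tracking of the normalization conventions used by Rosenberg to pass from the quasi-morphism on $\Ham(\Sigma_2,\omega)$ to the functional $\zeta_P$ on $C(\Sigma_2)$. A self-contained derivation amounts to unpacking that construction in the autonomous case and verifying that the central symmetry distributes weight evenly between the two middle saddles, which is precisely the content of \cite[Th\'eor\`eme~2]{Py6} as cited.
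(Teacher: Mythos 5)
The paper itself gives no proof of Theorem \ref{py calculate}: the statement is quoted as a special case of \cite[Th\'eor\`eme 2]{Py6} (see also \cite[Theorem 4.4]{R}), and since your argument, after the correct observation that $\{H,F_P\}=0$ forces $H$ to be constant on connected components of the level sets of $F_P$, defers the actual formula and the coefficient $\frac{1}{2}$ to exactly those references, it takes essentially the same route as the paper. One caution so the heuristics are not mistaken for a derivation: the involution exchanging the two handles acts nontrivially on $H_1(\Sigma_2)$, hence is not a Hamiltonian diffeomorphism, and no axiom of a partial symplectic quasi-state makes $\zeta_P$ invariant under it; moreover the singular levels through $p_2$ and $p_5$ also contain essential (non-contractible) loops, so the special role of $p_3,p_4$ cannot be read off from ``the other saddles only bound disks'' and genuinely rests on Py's and Rosenberg's explicit computation (whose construction, incidentally, is not the Gambaudo--Ghys-type configuration-space/Maslov-index one you describe).
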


\begin{proof}[Proof of Proposition \ref{py pseudoheavy}]
For any smooth function $f\colon\RR\to\RR$,
since $\{f\circ F_P,F_P\}=0$,
Theorem \ref{py calculate} implies that
\[
	\zeta_P(f\circ F_P)=\frac{1}{2}\bigl((f\circ F_P)(p_3)+(f\circ F_P)(p_4)\bigr)=\frac{1}{2}\bigl(f(c_3)+f(c_4)\bigr).
\]
By the stability of $\zeta_P$, this equality still holds for any continuous function $f$.
Thus, $(\Sigma_2,\omega)$, $\zeta_P$, $F_P$, $c_3$ and $c_4$ satisfy the assumption of Lemma \ref{general fooo}.
Hence Proposition \ref{py pseudoheavy} follows from Lemma \ref{general fooo}.
\end{proof}


\section{Simple partial symplectic quasi-states}\label{sec:simple}

Let $(M,\omega)$ be a closed symplectic manifold.
Let $\zeta\colon C(M)\to\RR$ be a partial symplectic quasi-state on $(M,\omega)$.
For a closed subset $X$ of $M$, we define a real number $\tau_{\zeta}(X)$ by
\[
	\tau_{\zeta}(X)=\inf\{\,\zeta(a)\mid a\colon M\to[0,1],\ a|_X\equiv 1\,\}.
\]

\begin{remark}
When $\zeta$ is a symplectic quasi-state in the sense of \cite{EP06} or, more generally,
a quasi-state in the sense of \cite{A91}, then the above $\tau_{\zeta}$ is a quasi-measure \cite{A91}.
\end{remark}

\begin{definition}
\label{def:simple}
A partial symplectic quasi-state $\zeta$ on $(M,\omega)$ is called \textit{simple} if
$\tau_{\zeta}(X)=0$ or $\tau_{\zeta}(X)=1$ for any closed subset $X$ of $M$.
\end{definition}

\begin{remark}
When $\zeta$ is a quasi-state, then our definition of simplicity is equivalent to that of \cite{A93}.
\end{remark}

To prove Proposition \ref{simple qs deha phv ha hv}, we use the following lemmas.

\begin{lemma}\label{open nbhd of phv}
Let $\zeta$ be a partial symplectic quasi-state on $(M,\omega)$ and $X$ a closed subset of $M$.
If $X$ is $\zeta$-pseudoheavy,
then for any open neighborhood $U$ of $X$, $\tau_{\zeta}(\overline{U})>0$.
\end{lemma}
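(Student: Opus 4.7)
The plan is to exploit the pseudoheaviness of $X$ to produce a single function witnessing a uniform positive lower bound for $\zeta$ on all cutoff functions $a$ appearing in the definition of $\tau_\zeta(\overline{U})$.

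First, given the open neighborhood $U$ of $X$, I would apply the definition of $\zeta$-pseudoheaviness directly to $U$ (since $U$ is an open neighborhood of $X$) to obtain a function $F \in \mathcal{H}(U)$ with $\zeta(F) > 0$. The support condition $\supp(F) \subset U$ means in particular that $F$ vanishes on $M \setminus U$, and hence on $M \setminus \overline{U}$.

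Next, I would use semi-homogeneity to normalize $F$. Setting $M_0 = \max_M F$, if $M_0 > 0$ I replace $F$ by $F/M_0$; this yields a function, still supported in $U$, with $\max_M F \leq 1$ and $\zeta(F) > 0$ (by semi-homogeneity, if $M_0 \leq 0$ then in fact $F \leq 0$ so $\zeta(F) \leq 0$ by monotonicity, contradicting the choice of $F$, so we may assume $M_0 > 0$). Now for any $a \colon M \to [0,1]$ with $a|_{\overline{U}} \equiv 1$, I claim $F \leq a$ pointwise: on $\overline{U}$ we have $F \leq 1 = a$, and on $M \setminus \overline{U}$ we have $F = 0 \leq a$ since $a \geq 0$. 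Monotonicity then gives $\zeta(a) \geq \zeta(F) > 0$, and taking the infimum over such $a$ yields $\tau_\zeta(\overline{U}) \geq \zeta(F) > 0$.

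There is essentially no hard step here; the argument is a direct comparison of functions combined with monotonicity and semi-homogeneity. The one point to be careful about is the normalization: one must verify that $\max_M F > 0$ so that rescaling by $1/\max_M F$ is legitimate and preserves $\zeta(F) > 0$, which follows immediately from the fact that $F \leq 0$ everywhere would force $\zeta(F) \leq \zeta(0) = 0$ by monotonicity and normalization, contradicting the choice of $F$.
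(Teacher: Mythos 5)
Your proof is correct and is essentially the paper's argument: both compare the pseudoheaviness witness $F\in\mathcal{H}(U)$ with a cutoff $a$ via monotonicity and semi-homogeneity, the only cosmetic difference being that you normalize $F$ by $\max_M F$ while the paper scales $a$ by $\max_M F$ instead. Your check that $\max_M F>0$ matches the paper's observation that $\max_M F\geq\zeta(F)>0$.
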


\begin{proof}
By the definition of pseudoheaviness, there exists a function $F\in\mathcal{H}(U)$ such that $\zeta(F)>0$.
By the monotonicity and the normalization of $\zeta$,
$\max_M{F}\geq\zeta(F)>0$.
For any continuous function $a\colon M\to[0,1]$ with  $a|_{\overline{U}}\equiv 1$,
by the monotonicity and the semi-homogeneity of $\zeta$,
\[
	\zeta(F)\leq\zeta\left(\max_M{F}\cdot a\right)=\max_M{F}\cdot\zeta(a).
\]
Thus, by the definition of $\tau_{\zeta}$, 
\[
	\tau_{\zeta}(\overline{U})\geq\left(\max_M{F}\right)^{-1}\cdot\zeta(F)>0.\qedhere
\]
\end{proof}

\begin{lemma}\label{tau one is heavy}
Let $\zeta$ be a partial symplectic quasi-state on $(M,\omega)$ and $X$ a closed subset of $M$.
Then, $\tau_{\zeta}(X)=1$ if and only if $X$ is $\zeta$-heavy.
\end{lemma}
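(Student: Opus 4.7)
The plan is to deduce both directions directly from the axioms of $\zeta$. The forward implication is almost immediate; the reverse requires building, from an arbitrary continuous test function $H$, an explicit cutoff with $a|_X\equiv 1$ that witnesses the heaviness inequality $\zeta(H)\geq\inf_X H$ through the stability axiom.

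For the forward direction, suppose $X$ is $\zeta$-heavy. For any continuous $a\colon M\to[0,1]$ with $a|_X\equiv 1$, heaviness gives $\zeta(a)\geq\inf_X a=1$, while $a\leq 1$ combined with monotonicity and normalization gives $\zeta(a)\leq\zeta(1)=1$. Hence $\zeta(a)=1$ for every such $a$, and consequently $\tau_\zeta(X)=1$.

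For the reverse direction, I would assume $\tau_\zeta(X)=1$ and let $H\in C(M)$ with $m=\inf_X H$. Applying the stability axiom to $H$ and $H-m$ yields $\zeta(H-m)=\zeta(H)-m$, so after replacing $H$ by $H-m$ it suffices to prove $\zeta(H)\geq 0$ whenever $H|_X\geq 0$. For each small $\delta>0$ I would set
\[
	a_\delta(x)=\min\!\bigl(1,\ \max(0,\ (H(x)+\delta)/\delta)\bigr),
\]
which is continuous, $[0,1]$-valued, and equal to $1$ on $X$ (since $H|_X\geq 0$ forces $(H+\delta)/\delta\geq 1$ there). The assumption $\tau_\zeta(X)=1$, together with monotonicity and normalization, forces $\zeta(a_\delta)=1$, and semi-homogeneity then gives $\zeta(\lambda a_\delta)=\lambda$ for every $\lambda>0$. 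A direct case analysis on the three regions $\{H\geq 0\}$, $\{-\delta<H<0\}$, and $\{H\leq-\delta\}$ shows that $H-\lambda a_\delta\geq-\lambda$ pointwise as soon as $\lambda\geq\max(\delta,-\min_M H)$. Feeding this estimate into stability gives
\[
	\zeta(H)\geq\zeta(\lambda a_\delta)+\min_M(H-\lambda a_\delta)\geq\lambda-\lambda=0,
\]
which is the desired inequality.

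The only delicate step is verifying the pointwise bound $H-\lambda a_\delta\geq-\lambda$ on the intermediate region $\{-\delta<H<0\}$, where $a_\delta=(H+\delta)/\delta$ genuinely interpolates; there the inequality reduces to $H(1-\lambda/\delta)\geq 0$, which is automatic once $\lambda\geq\delta$ because both factors are non-positive. The remaining two regions are trivial, and otherwise the proof is a mechanical application of the six axioms with no extra machinery required.
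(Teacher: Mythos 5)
Your proof is correct and follows essentially the same route as the paper's: the forward direction is identical, and for the reverse direction both arguments reduce by a constant shift and then compare $H$ with an explicit $[0,1]$-valued cutoff equal to $1$ on $X$, exploiting $\tau_\zeta(X)=1$ together with monotonicity/stability and semi-homogeneity. The only cosmetic difference is the choice of cutoff: your steep ramp $a_\delta$ scaled by a large $\lambda$ and fed into the stability inequality, versus the paper's truncation $\underline{G}=\min\{G,\inf_X G\}$ rescaled by $(\inf_X G)^{-1}$ and compared via monotonicity.
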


\begin{proof}
Assume that $\tau_{\zeta}(X)=1$.
Let $G\colon M\to\RR$ be a continuous function such that $G>0$.
Define a continuous function $\underline{G}\colon M\to\RR$ by
\[
	\underline{G}(x)=\min\left\{G(x),\inf_XG\right\}.
\]
Then, the function
$(\inf_XG)^{-1}\cdot \underline{G}$ takes values in $[0,1]$ and $(\inf_XG)^{-1}\cdot \underline{G}|_X\equiv 1$.
Thus, by the definition of $\tau_{\zeta}$ and the semi-homogeneity of $\zeta$,
\[
	\inf_XG\cdot\tau_{\zeta}(X)\leq\inf_XG\cdot\zeta\left(\left(\inf_XG\right)^{-1}\cdot \underline{G}\right)=\zeta(\underline{G}).
\]
By the monotonicity of $\zeta$ and $\tau_{\zeta}(X)=1$,
\[
	\inf_XG=\inf_XG\cdot\tau_{\zeta}(X)\leq\zeta(\underline{G})\leq\zeta(G).
\]
Let $H\colon M\to\RR$ be a continuous function.
Take a positive number $r$ so that $H+r>0$.
Then, the above argument yields $\inf_X(H+r)\leq\zeta(H+r)$.
By the stability of $\zeta$,
\[
	\zeta(H+r)-\zeta(H)\leq r.
\]
Therefore,
\[
	\inf_XH=\inf_X(H+r)-r\leq\zeta(H+r)-r\leq\zeta(H).
\]
Since $H$ is arbitrary, $X$ is $\zeta$-heavy.

Conversely, assume that $X$ is $\zeta$-heavy.
Let $a\colon M\to[0,1]$ be a continuous function with $a|_X\equiv 1$.
By the monotonicity and the normalization of $\zeta$, $\zeta(a)\leq 1$.
On the other hand, since $X$ is $\zeta$-heavy,
\[
	\zeta(a)\geq\inf_Xa=1,
\]
which concludes that $\zeta(a)=1$.
Since $a$ is arbitrary, $\tau_\zeta(X)=1$.
\end{proof}

\begin{lemma}\label{lem:nbd is hv}
Let $\zeta$ be a partial symplectic quasi-state on $(M,\omega)$ and $X$ a closed subset of $M$.
If the closure of any sufficiently small open neighborhood of $X$ is $\zeta$-heavy $($resp.\ $\zeta$-superheavy$)$,
then $X$ is also $\zeta$-heavy $($resp.\ $\zeta$-superheavy$)$.
\end{lemma}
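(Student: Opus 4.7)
The plan is to exhaust $X$ by a shrinking family of closed neighborhoods and pass to the limit using uniform continuity of test functions. Since $X$ is a closed subset of the closed (hence compact) manifold $M$, $X$ is itself compact, and we may fix a metric $d$ compatible with the topology on $M$.

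First I would consider the decreasing family $U_n = \{x \in M : d(x,X) < 1/n\}$ of open neighborhoods of $X$. By hypothesis, there is some threshold $N$ such that $\overline{U_n}$ is $\zeta$-heavy (resp.\ $\zeta$-superheavy) for every $n \geq N$. Next, for an arbitrary $H \in C(M)$, I would apply the defining inequality of heaviness (resp.\ superheaviness) of $\overline{U_n}$ to obtain
\[
\zeta(H) \geq \inf_{\overline{U_n}} H \qquad \left(\text{resp.\ } \zeta(H) \leq \sup_{\overline{U_n}} H\right)
\]
for every $n \geq N$.

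The final step is to let $n \to \infty$. Since $H$ is continuous on the compact manifold $M$, it is uniformly continuous; combined with the fact that every point of $\overline{U_n}$ lies within distance $1/n$ of the compact set $X$, this forces
\[
\inf_{\overline{U_n}} H \;\longrightarrow\; \inf_X H \qquad \text{and} \qquad \sup_{\overline{U_n}} H \;\longrightarrow\; \sup_X H.
\]
Passing to the limit in the displayed inequalities yields $\zeta(H) \geq \inf_X H$ (resp.\ $\zeta(H) \leq \sup_X H$), and since $H$ is arbitrary this gives $\zeta$-heaviness (resp.\ $\zeta$-superheaviness) of $X$.

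There is no substantive obstacle in this argument; it is essentially a continuity/limit manipulation. The only point that genuinely requires the compactness hypothesis on $M$ is the uniform convergence $\inf_{\overline{U_n}} H \to \inf_X H$, which fails in general for non-compact ambient spaces. Once this estimate is in hand, the defining inequality of heaviness (resp.\ superheaviness) transfers cleanly from the neighborhoods $\overline{U_n}$ to $X$.
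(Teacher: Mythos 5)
Your argument is correct and is essentially the same as the paper's proof: the paper simply phrases the limiting step as an $\varepsilon$-argument, choosing for each $\varepsilon>0$ a neighborhood $U$ with $\zeta$-heavy closure and $\inf_X H-\inf_{\overline{U}}H\leq\varepsilon$, which is exactly your statement that $\inf_{\overline{U_n}}H\to\inf_X H$ by continuity of $H$ on the compact manifold. No substantive difference.
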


\begin{proof}
Let $H\in C(M)$.
For any $\varepsilon>0$ choose an open neighborhood $U$ of $X$ whose closure is $\zeta$-heavy so that
\[
	0\leq\inf_XH-\inf_{\overline{U}}H\leq\varepsilon.
\]
Since $\overline{U}$ is $\zeta$-heavy,
\[
	\zeta(H)\geq\inf_{\overline{U}}H\geq\inf_XH-\varepsilon.
\]
Since $\varepsilon$ is arbitrary,
$\zeta(H)\geq\inf_XH$ for all $H\in C(M)$.
Thus $X$ is $\zeta$-heavy.
We can prove the case of superheaviness similarly.
\end{proof}

Now we are in a position to prove Proposition \ref{simple qs deha phv ha hv}.

\begin{proof}[Proof of Proposition \ref{simple qs deha phv ha hv}]
Let $X$ be a $\zeta$-pseudoheavy subset of $M$.
Let $U$ be an open neighborhood of $X$.
Then, by Lemma \ref{open nbhd of phv}, $\tau_{\zeta}(\overline{U})>0$.
Since $\zeta$ is simple, $\tau_{\zeta}(\overline{U})=1$.
Hence by Lemma \ref{tau one is heavy}, $\overline{U}$ is $\zeta$-heavy.
Since the closure of any open neighborhood of $X$ is $\zeta$-heavy,
by Lemma \ref{lem:nbd is hv}, $X$ is also $\zeta$-heavy.
\end{proof}

We can prove the converse of Proposition \ref{simple qs deha phv ha hv}.

\begin{proposition}
Let $\zeta$ be a partial symplectic quasi-state on $(M,\omega)$ such that every $\zeta$-pseudoheavy subset is $\zeta$-heavy.
Then, $\zeta$ is simple.
\end{proposition}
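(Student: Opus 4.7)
The plan is to prove the contrapositive of the non-trivial direction of simplicity: for every closed subset $X$ of $M$ with $\tau_\zeta(X)>0$, I will show $\tau_\zeta(X)=1$. By Lemma \ref{tau one is heavy}, this amounts to showing that every such $X$ is $\zeta$-heavy, and by the hypothesis of the proposition it suffices in turn to show that every such $X$ is $\zeta$-pseudoheavy.

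The central step is to exhibit, for a given closed $X$ with $\tau_\zeta(X)>0$ and an arbitrary open neighborhood $U$ of $X$, a function $F\in\mathcal{H}(U)$ with $\zeta(F)>0$. Since $M$ is compact Hausdorff and hence normal, I would choose an open set $V$ with $X\subset V\subset\overline{V}\subset U$, and invoke Urysohn's lemma to produce a continuous function $a\colon M\to[0,1]$ with $a|_X\equiv 1$ and $a|_{M\setminus V}\equiv 0$. Then $\supp(a)\subset\overline{V}\subset U$, so $a\in\mathcal{H}(U)$, and the defining infimum of $\tau_\zeta$ immediately gives $\zeta(a)\geq\tau_\zeta(X)>0$. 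Taking $F=a$ proves the $\zeta$-pseudoheaviness of $X$.

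From here the argument concludes without further difficulty: the hypothesis of the proposition promotes $X$ to a $\zeta$-heavy subset, and Lemma \ref{tau one is heavy} then delivers $\tau_\zeta(X)=1$. There is no serious obstacle in the proof; it is essentially a matter of unwinding the definitions combined with a Urysohn-type cutoff. The only point worth flagging is that $\mathcal{H}(U)$ demands containment of the support (rather than merely of the non-vanishing locus) in $U$, which is why one needs the intermediate neighborhood $V$ with $\overline{V}\subset U$ instead of feeding $U$ directly into Urysohn's lemma.
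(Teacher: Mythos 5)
Your proof is correct and follows essentially the same route as the paper: show that $\tau_\zeta(X)>0$ forces $X$ to be $\zeta$-pseudoheavy, invoke the hypothesis to get $\zeta$-heaviness, and conclude $\tau_\zeta(X)=1$ via Lemma \ref{tau one is heavy}. The only difference is that you spell out the Urysohn cutoff with the intermediate neighborhood $\overline{V}\subset U$, a detail the paper leaves implicit in the phrase ``by the definitions of pseudoheaviness and $\tau_\zeta$.''
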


\begin{proof}
Choose arbitrary closed subset $X$ of $M$ such that $\tau_{\zeta}(X)>0$.
By the definitions of pseudoheaviness and $\tau_{\zeta}$, $X$ is $\zeta$-pseudoheavy.
By the assumption, $X$ is $\zeta$-heavy and thus, by Lemma \ref{tau one is heavy}, $\tau_{\zeta}(X)=1$.
Since $X$ is arbitrary, $\zeta$ is simple.
\end{proof}

\section{Generalized coupled angular momenta}\label{sec:CAM}

In this section, we provide applications of
Theorems \ref{existence of pseudoheavy} and \ref{theorem:AVSC stem is shv}.
Let
\[
	S^2=\left\{\,(x,y,z)\in\RR^3\relmiddle| x^2+y^2+z^2=1\,\right\}
\]
be the two-sphere with the standard symplectic form $\omega_{S^2}$.
We consider the product $S^2\times S^2$ with the symplectic form $\omega_R=\mathrm{pr}_1^\ast\omega_{S^2}+R(\mathrm{pr}_2^\ast\omega_{S^2})$,
where $R$ is a positive number and
$\mathrm{pr}_1, \mathrm{pr}_2\colon S^2\times S^2\to S^2$ are the first and second projections, respectively.
Let $f\colon[-1,1]^2\to\RR$ be a smooth function.
We define functions $J_R,H_f\colon S^2\times S^2\to\RR$ by the formulas
\begin{equation}\label{eq:J}
	J_R(x_1,y_1,z_1,x_2,y_2,z_2)=z_1+Rz_2,
\end{equation}
\begin{equation}\label{eq:H}
	H_f(x_1,y_1,z_1,x_2,y_2,z_2)=x_1x_2+y_1y_2+z_1z_2-f(z_1,z_2),
\end{equation}
respectively, and set $\Phi_{R,f}=(J_R,H_f)\colon S^2\times S^2\to\RR^2$.
Since the function $H_f$ is conserved along the Hamiltonian vector field $X_{J_R}$ associated to $J_R$, Noether's theorem implies that $J_R$ and $H_f$ are Poisson-commutative on $(S^2\times S^2,\omega_R)$.

Let $s\in\RR$.
We set $H^s=H_f$ and $\Phi_R^s=\Phi_{R,f}$ when $f(z_1,z_2)=(1-s)z_1z_2$.
Namely, for each $(x_1,y_1,z_1,x_2,y_2,z_2)\in S^2\times S^2$,
\begin{equation}\label{eq:Hs}
	H^s(x_1,y_1,z_1,x_2,y_2,z_2)=x_1x_2+y_1y_2+sz_1z_2.
\end{equation}


\subsection{Non-displaceable fibers of $\Phi_{R,f}$}

In the following corollary of Theorem \ref{existence of pseudoheavy} and Proposition \ref{not pseudoheavy}, we set $R=1$.

\begin{corollary}\label{cor:FOOO-OU-EP}
If $\|f\|_{L^\infty}<1/4$,
then $\Phi_{1,f}$ has at least two non-displaceable fibers.
\end{corollary}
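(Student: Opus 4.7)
The plan is to apply Theorem \ref{existence of pseudoheavy} twice, using two different partial symplectic quasi-states on $(S^2\times S^2,\omega_1)$ whose superheavy Lagrangians lie in distinct fibers of the unperturbed map $\Phi_1^1=(J_1,H^1)$, and then exploit the bound $\|f\|_{L^\infty}<1/4$ to separate the two resulting pseudoheavy fibers of the perturbed map $\Phi_{1,f}$.

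Concretely, I would use the Entov--Polterovich quasi-state $\zeta_{-1}$, for which $L_{-1}=(\Phi_1^1)^{-1}(0,-1)$ is superheavy, and the Fukaya--Oh--Ohta--Ono quasi-state $\zeta_{-1/2}$, for which $L_{-1/2}=(\Phi_1^1)^{-1}(0,-1/2)$ is superheavy; both are recalled in the introduction. Applying Theorem \ref{existence of pseudoheavy} to the moment map $\Phi_{1,f}$ and the partial symplectic quasi-state $\zeta_{-1}$ (resp. $\zeta_{-1/2}$) produces a point $y_1\in\Phi_{1,f}(S^2\times S^2)$ (resp. $y_2$) such that $\Phi_{1,f}^{-1}(y_1)$ is $\zeta_{-1}$-pseudoheavy (resp. $\Phi_{1,f}^{-1}(y_2)$ is $\zeta_{-1/2}$-pseudoheavy). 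By Proposition \ref{not pseudoheavy}, each of these fibers is non-displaceable from itself and, in addition, must intersect the corresponding superheavy Lagrangian, so $\Phi_{1,f}^{-1}(y_1)\cap L_{-1}\neq\emptyset$ and $\Phi_{1,f}^{-1}(y_2)\cap L_{-1/2}\neq\emptyset$.

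What remains is to verify $y_1\neq y_2$. Any point $p\in\Phi_{1,f}^{-1}(y_1)\cap L_{-1}$ satisfies $J_1(p)=0$ and $H^1(p)=-1$, so by the definition of $H_f$ one has $y_1=\bigl(0,\,-1-f(z_1(p),z_2(p))\bigr)$. Similarly any $p'\in\Phi_{1,f}^{-1}(y_2)\cap L_{-1/2}$ gives $y_2=\bigl(0,\,-1/2-f(z_1(p'),z_2(p'))\bigr)$. The assumption $\|f\|_{L^\infty}<1/4$ then forces the second coordinate of $y_1$ to lie in the open interval $(-5/4,-3/4)$ while the second coordinate of $y_2$ lies in $(-3/4,-1/4)$; these intervals are disjoint, so $y_1\neq y_2$, and $\Phi_{1,f}^{-1}(y_1)$, $\Phi_{1,f}^{-1}(y_2)$ are the two desired non-displaceable fibers.

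I do not anticipate a substantive obstacle: the argument is a direct combination of Theorem \ref{existence of pseudoheavy} and Proposition \ref{not pseudoheavy} with two external superheaviness inputs. The only delicate point is the numerical bookkeeping — one needs exactly the threshold $1/4$ so that the intervals of $H^1$-values around $-1$ and $-1/2$, enlarged by the oscillation of $f$, remain disjoint and thereby separate the two pseudoheavy fibers.
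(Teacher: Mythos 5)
Your proposal is correct and follows essentially the same argument as the paper: apply Theorem \ref{existence of pseudoheavy} to $\Phi_{1,f}$ with the quasi-states $\zeta_{-1}$ and $\zeta_{-1/2}$, use Proposition \ref{not pseudoheavy} to force the resulting pseudoheavy fibers to meet $L_{-1}$ and $L_{-1/2}$ respectively, and separate the two fiber values via the bound $\lvert H^1-H_f\rvert\leq\|f\|_{L^\infty}<1/4$, which places them in the disjoint intervals $(-5/4,-3/4)$ and $(-3/4,-1/4)$. The paper phrases the separation as $\Phi_{1,f}(L_c)\subset\{0\}\times(\cdot,\cdot)$ rather than evaluating at an intersection point, but this is the same computation.
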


\begin{proof}
Recall $L_{-1/2}=(\Phi_1^1)^{-1}(0,-1/2)$ and $L_{-1}=(\Phi_1^1)^{-1}(0,-1)$ in $S^2\times S^2$.
As pointed out in Secition \ref{sec:results}, $L_c$ is $\zeta_c$-superheavy for each $c=-1/2,-1$.
By Theorem \ref{existence of pseudoheavy}, for each $c=-1/2,-1$
there exists $w_c\in\Phi_{1,f}(S^2\times S^2)$ such that $\Phi_{1,f}^{-1}(w_c)$ is $\zeta_c$-pseudoheavy.
Since $L_c$ is $\zeta_c$-superheavy,
by Proposition \ref{not pseudoheavy},
\begin{equation}\label{eq:disjoint}
	\Phi_{1,f}^{-1}(w_c)\cap L_c\neq\emptyset.
\end{equation}

Let $(x_1,y_1,z_1,x_2,y_2,z_2)\in S^2\times S^2$.
Since $\lvert z_1\rvert\leq 1$ and $\lvert z_2\rvert\leq 1$,
\[
	\left\lvert H^1(x_1,y_1,z_1,x_2,y_2,z_2)-H_f(x_1,y_1,z_1,x_2,y_2,z_2)\right\rvert
	=\lvert f(z_1,z_2)\rvert
	\leq\|f\|_{L^{\infty}}<\frac{1}{4}.
\]
Thus,
\[
	H_f(L_{-1/2})\subset\left(-\frac 12-\frac 14,-\frac 12+\frac 14\right)=\left(-\frac 34,-\frac 14\right)
\]
and
\[
	H_f(L_{-1})\subset\left(-1-\frac 14,-1+\frac 14\right)=\left(-\frac 54,-\frac 34\right).
\]
Since $J_1(L_{-1/2})=J_1(L_{-1})=\{0\}$,
\[
	\Phi_{1,f}(L_{-1/2})\subset\{0\}\times\left(-\frac 34,-\frac 14\right)%
	\quad\text{and}\quad%
	\Phi_{1,f}(L_{-1})\subset\{0\}\times\left(-\frac 54,-\frac 34\right).
\]
Therefore, by \eqref{eq:disjoint}, $w_{-1/2}\in\{0\}\times (-3/4,-1/4)$ and $w_{-1}\in\{0\}\times (-5/4,-3/4)$.
Since $(-5/4,-3/4)\cap (-3/4,-1/4)=\emptyset$,
we conclude that $\Phi_{1,f}^{-1}(w_c)$, $c=-1/2,-1$, are mutually disjoint $\zeta_c$-pseudoheavy subsets, respectively.
In particular, by Proposition \ref{not pseudoheavy},
the map $\Phi_{1,f}$ has at least two non-displaceable fibers.
\end{proof}

On the other hand, some $\Phi_{R,f}$ has only one non-displaceable fiber.
More precisely, we have the following theorem.
For a positive number $R$ and a smooth function $f\colon [-1,1]^2\to\RR$, we define a function $F_{R,f}\colon[-1,1]\to\RR$ by
\[
	F_{R,f}(z)=-\frac 12\bigl(f(-Rz,z)+f(Rz,-z)+2Rz^2\bigr).
\]

\begin{theorem}\label{HP example}
Let $R$ be a positive number and $f\colon [-1,1]^2\to\RR$ a smooth function such that $F_{R,f}\equiv 0$.
Then, $\Phi_{R,f}^{-1}(0,0)$ is a stem,
in particular, by Theorem \ref{theorem:stem is shv}, $\zeta$-superheavy for any partial symplectic quasi-state $\zeta$ on $(S^2\times S^2,\omega_R)$.
\end{theorem}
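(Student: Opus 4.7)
The plan is to verify directly that $\Phi_{R,f}^{-1}(0,0)$ is a stem in the sense of Definition \ref{def:NPH-stem}, with respect to the Poisson-commutative subspace $\mathbb{A}=\mathrm{span}_{\RR}(1,J_R,H_f)\subset C^{\infty}(S^2\times S^2)$; once this is done, Theorem \ref{theorem:stem is shv} yields $\zeta$-superheaviness for every partial symplectic quasi-state $\zeta$. The strategy is to exhibit a single Hamiltonian diffeomorphism $T$ of $(S^2\times S^2,\omega_R)$ that displaces every non-central fiber of $\Phi_{R,f}$, with the hypothesis $F_{R,f}\equiv 0$ entering at the end as precisely the identity that forces $H_f\circ T=-H_f$ on $\{J_R=0\}$.

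First I would construct $T$ using the Hamiltonian $SO(3)$-action on $(S^2,\omega_{S^2})$, whose moment map is the inclusion $S^2\hookrightarrow\RR^3$. Since $y_1$ and $x_2$ depend on different factors they Poisson-commute, so the time-$\pi$ map of the Hamiltonian $y_1+x_2\in C^{\infty}(S^2\times S^2)$ is well defined and equals the composition of a $\pi$-rotation of the first sphere about its $y$-axis with a $\pi$-rotation of the second sphere about its $x$-axis; explicitly,
\[
T(x_1,y_1,z_1,x_2,y_2,z_2)=(-x_1,y_1,-z_1,\,x_2,-y_2,-z_2).
\]

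Next I would compute the pullbacks. A direct calculation gives $J_R\circ T=-J_R$ and $H_f\circ T=-x_1x_2-y_1y_2+z_1z_2-f(-z_1,-z_2)$. Restricting to $\{J_R=0\}$ and writing $z=z_2$, so that $z_1=-Rz$, the sum telescopes to
\[
H_f+H_f\circ T=-2Rz^2-f(-Rz,z)-f(Rz,-z)=2F_{R,f}(z),
\]
which vanishes identically by hypothesis. Hence $H_f\circ T=-H_f$ on $\{J_R=0\}$. Displaceability of non-central fibers is now immediate: for $(a,b)\neq(0,0)$ with $a\neq 0$, the image $T(\Phi_{R,f}^{-1}(a,b))$ lies in $\{J_R=-a\}$ and is disjoint from $\{J_R=a\}\supset\Phi_{R,f}^{-1}(a,b)$; for $(0,b)$ with $b\neq 0$, the image lies in $\{J_R=0,\,H_f=-b\}$ and is disjoint from $\{J_R=0,\,H_f=b\}\supset\Phi_{R,f}^{-1}(0,b)$. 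Thus $\Phi_{R,f}^{-1}(0,0)$ is a stem.

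The main obstacle, and essentially the only content, is locating the correct symmetry. What is needed is a single Hamiltonian involution $T$ that simultaneously negates $J_R$ (to dispose of every fiber outside $\{J_R=0\}$), negates the horizontal coupling $x_1x_2+y_1y_2$, and preserves the vertical coupling $z_1z_2$, so that the $f$-dependent remainder on $\{J_R=0\}$ collapses to exactly the combination $f(-Rz,z)+f(Rz,-z)+2Rz^2$ defining $F_{R,f}$. These three requirements essentially force $\pi$-rotations about distinct horizontal axes on the two factors, and they also explain the otherwise mysterious shape of the hypothesis $F_{R,f}\equiv 0$.
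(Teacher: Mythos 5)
Your proposal is correct and follows essentially the same route as the paper: the involution $T$ you construct is exactly the map $\psi(x_1,y_1,z_1,x_2,y_2,z_2)=(-x_1,y_1,-z_1,x_2,-y_2,-z_2)$ used in the paper's Lemma~\ref{prop:displace}, and your computation $H_f+H_f\circ T=2F_{R,f}$ on $\{J_R=0\}$ is the same identity the paper uses (the paper just states it for general $f$, displacing all fibers outside $\{0\}\times[m_{R,f},M_{R,f}]$, and then specializes to $F_{R,f}\equiv 0$). The only nitpick is the normalization in your construction of $T$: with respect to $\omega_R$ the generating Hamiltonian should be $y_1+Rx_2$ rather than $y_1+x_2$ when $R\neq 1$, a harmless fix since $T$ is in any case a product of $\pi$-rotations and hence Hamiltonian for every $R>0$.
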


Theorem \ref{HP example} immediately follows from the following lemma.
We define a diffeomorphism $\psi$ of $S^2\times S^2$ by
\[
	\psi(x_1,y_1,z_1,x_2,y_2,z_2)=(-x_1,y_1,-z_1,x_2,-y_2,-z_2).
\]
We note that $\psi$ is a Hamiltonian diffeomorphism of $(S^2\times S^2,\omega_R)$ for any $R>0$.

\begin{lemma}\label{prop:displace}
Let $R$ be a positive number and $f\colon [-1,1]^2\to\RR$ a smooth function.
If $(a,b)\notin\{0\}\times [m_{R,f},M_{R,f}]$,
then $\psi$ displaces $\Phi_{R,f}^{-1}(a,b)$ from itself,
where $M_{R,f}$ and $m_{R,f}$ are the maximum and the minimum of the function $F_{R,f}$, respectively.
\end{lemma}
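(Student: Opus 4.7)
The plan is to show the contrapositive: if a fiber $\Phi_{R,f}^{-1}(a,b)$ intersects its $\psi$-image, then necessarily $(a,b)\in\{0\}\times[m_{R,f},M_{R,f}]$. Since $\psi$ is already noted to be Hamiltonian on $(S^2\times S^2,\omega_R)$, and since fibers of $\Phi_{R,f}$ are closed, this will give displaceability in the sense of the paper.

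First I would compute how $\psi$ acts on the two components of $\Phi_{R,f}$. From \eqref{eq:J}, since $\psi$ sends $(z_1,z_2)$ to $(-z_1,-z_2)$, one gets $J_R\circ\psi=-J_R$. Thus if both $p$ and $\psi(p)$ satisfy $J_R=a$, then $a=-a$, forcing $a=0$ and, at the base point $p=(x_1,y_1,z_1,x_2,y_2,z_2)$, the relation $z_1=-Rz_2$. For the second component, a direct substitution into \eqref{eq:H} yields
\[
H_f\circ\psi(p)=-x_1x_2-y_1y_2+z_1z_2-f(-z_1,-z_2),
\]
so that the inconvenient $x_1x_2+y_1y_2$ terms cancel when one adds $H_f(p)$ and $H_f(\psi(p))$, leaving
\[
H_f(p)+H_f(\psi(p))=2z_1z_2-f(z_1,z_2)-f(-z_1,-z_2).
\]

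Next, assuming both $p$ and $\psi(p)$ lie in $\Phi_{R,f}^{-1}(0,b)$, the left-hand side equals $2b$. Substituting $z_1=-Rz_2$ into the right-hand side and comparing with the definition
\[
F_{R,f}(z)=-\tfrac12\bigl(f(-Rz,z)+f(Rz,-z)+2Rz^2\bigr),
\]
I would verify that the right-hand side equals $2F_{R,f}(z_2)$. Hence $b=F_{R,f}(z_2)\in[m_{R,f},M_{R,f}]$. This establishes that any fiber over $(a,b)\notin\{0\}\times[m_{R,f},M_{R,f}]$ is disjoint from its $\psi$-image, which is exactly the desired displaceability.

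This is essentially a bookkeeping computation rather than a theorem requiring a genuine idea, so no step is particularly difficult; the only place where one must be slightly careful is recognizing that the sign choices in $\psi$ (negating $x_1,z_1,y_2,z_2$ while fixing $y_1,x_2$) are precisely those that make $x_1x_2+y_1y_2$ antisymmetric under $\psi$ and $z_1z_2$ symmetric. Also, one should keep in mind that although the definition of $F_{R,f}$ is written on $[-1,1]$, the relevant values of $z_2$ lie in $[-1,1]\cap[-1/R,1/R]$ so that $z_1=-Rz_2\in[-1,1]$; the resulting range of $F_{R,f}$ is contained in $[m_{R,f},M_{R,f}]$, which is all that is needed for the contrapositive argument.
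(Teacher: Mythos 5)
Your proposal is correct and is essentially the paper's own argument: the same use of $J_R\circ\psi=-J_R$ to force $a=0$ and $z_1=-Rz_2$, and the same computation of $H_f\circ\psi$ producing $F_{R,f}$, merely phrased as a contrapositive from an assumed point of $\psi(X)\cap X$ rather than as a direct containment $\Phi_{R,f}\bigl(\psi(\Phi_{R,f}^{-1}(0,b))\bigr)\subset\{0\}\times[-b+2m_{R,f},-b+2M_{R,f}]$. No gaps; your side remark about the relevant range of $z_2$ is a harmless clarification of the paper's definition of $F_{R,f}$.
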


\begin{example}\label{example:stem}
Assume that $f(z_1,z_2)=(1-s)z_1z_2$ where $s\geq 0$.
Then, $F_{R,f}(z)=-sRz^2$.
Since $-1\leq z\leq 1$, $m_{R,f}=-sR$ and $M_{R,f}=0$.
By Lemma \ref{prop:displace},
the fiber $(\Phi_R^s)^{-1}(a,b)$ is displaceable from itself whenever $(a,b)\notin\{0\}\times [-sR,0]$,
where $\Phi_R^s=(J_R,H^s)$ (recall \eqref{eq:Hs} for the definition of $H^s$).
Moreover, Theorem \ref{HP example} means that $(\Phi_R^0)^{-1}(0,0)$ is a stem.
\end{example}

\begin{proof}[Proof of Lemma \ref{prop:displace}]
Since
\[
	J_R\bigl(\psi(p)\bigr)%
	=-z_1-Rz_2=-J_R(p)
\]
for any $p=(x_1,y_1,z_1,x_2,y_2,z_2)\in S^2\times S^2$,
\[
	\Phi_{R,f}\Bigl(\psi\bigl(\Phi_{R,f}^{-1}(a,b)\bigr)\Bigr)\subset\{-a\}\times\RR.
\]
Hence $\psi$ displaces $\Phi_{R,f}^{-1}(a,b)$ from itself whenever $a\neq 0$.

Thus, we consider the case $a=0$.
Let $b\in\RR$ and $p=(x_1,y_1,z_1,x_2,y_2,z_2)\in\Phi_{R,f}^{-1}(0,b)$.
Then, $z_1=-Rz_2$ and $H_f(p)=b$.
Hence
\begin{align*}
	H_f\bigl(\psi(p)\bigr)%
	&=-x_1x_2-y_1y_2+z_1z_2-f(-z_1,-z_2)\\
	&=-H_f(p)+2z_1z_2-f(z_1,z_2)-f(-z_1,-z_2)\\
	&=-b-2Rz_2^2-f(-Rz_2,z_2)-f(Rz_2,-z_2).
\end{align*}
By the definitions of $M_{R,f}$ and $m_{R,f}$,
\[
	-b+2m_{R,f}\leq H_f\bigl(\psi(p)\bigr)\leq -b+2M_{R,f}.
\]
Therefore, since $J_R\Bigl(\psi\bigl(\Phi_{R,f}^{-1}(0,b)\bigr)\Bigr)=\{0\}$,
\[
	\Phi_{R,f}\Bigl(\psi\bigl(\Phi_{R,f}^{-1}(0,b)\bigr)\Bigr)\subset
	\{0\}\times[-b+2m_{R,f},-b+2M_{R,f}].
\]
Hence $\psi$ displaces $\Phi_{R,f}^{-1}(0,b)$ from itself whenever $b\notin [-b+2m_{R,f},-b+2M_{R,f}]$,
equivalently, $b\notin [m_{R,f},M_{R,f}]$.

As a consequence, $\psi$ displaces $\Phi_{R,f}^{-1}(a,b)$ from itself
whenever $(a,b)\notin\{0\}\times [m_{R,f},M_{R,f}]$.
\end{proof}

We set
\[
	\aleph=\inf\{\,\|f\|_{L^{\infty}}\mid \Phi_{1,f}\ \text{has only one non-displaceable fiber}\,\}.
\]
By Corollary \ref{cor:FOOO-OU-EP} and Theorem \ref{HP example}, $1/4\leq\aleph\leq1$.
It is an interesting problem to determine the exact value of $\aleph$.


\subsection{Proof of Theorem \ref{intro memo}}\label{sec:proof of strange}

We set $M=(S^2\setminus\{N,S\})^2\subset S^2\times S^2$ where $N=(0,0,1)$ and $S=(0,0,-1)$.
The Hamiltonian circle action generated by the function $J_1\colon S^2\times S^2\to\RR$ is free on the regular level set $(J_1|_M)^{-1}(0)$ (recall \eqref{eq:J} for the definition of $J_1$).
Then, the quotient manifold $(J_1|_M)^{-1}(0)/S^1$ carries a symplectic form $\bar{\sigma}$ such that $\bar{\tau}^*\bar{\sigma}=\iota^*\omega_1$,
where
\[
	\bar{\tau}\colon (J_1|_M)^{-1}(0)\to (J_1|_M)^{-1}(0)/S^1%
	\quad\text{and}\quad%
	\iota\colon (J_1|_M)^{-1}(0)\hookrightarrow S^2\times S^2
\]
are the projection and the inclusion, respectively (see \cite{MW} for details, see also \cite[Section 1.7]{PR}).

Let $(z,\theta)$ denote the coordinates of the annulus $(-1,1)\times\RR/2\pi\ZZ$ and set $\sigma=(4\pi)^{-1}dz\wedge d\theta$.
Eliashberg and Polterovich \cite{ElP} implicitly constructed a symplectomorphism $\phi\colon\bigl((J_1|_M)^{-1}(0)/S^1,\bar{\sigma}\bigr)\to\bigl((-1,1)\times\RR/2\pi\ZZ,\sigma\bigr)$ such that
\[
	\tau(x_1,y_1,z,x_2,y_2,-z)=(z,\theta),
\]
for each $(x_1,y_1,z,x_2,y_2,-z)\in (J_1|_M)^{-1}(0)\subset S^2\times S^2$,
where $\tau=\phi\circ\bar{\tau}$ and $\theta$ is the angle between $(x_1,y_1)$ and $(x_2,y_2)$ in $\RR^2$.

Let $s,b\in\RR$ be real numbers satisfying $0\leq s\leq1$ and $-s<b\leq 0$.
We set
\[
	\alpha(s,b)=\left\{\,(z,\theta)\in (-1,1)\times\RR/2\pi\ZZ\relmiddle|z^2=\frac{\cos{\theta}-b}{\cos{\theta}+s}\,\right\}.
\]
Then $\alpha(s,b)$ is a contractible simple closed curve in the annulus $(-1,1)\times\RR/2\pi\ZZ$.
Moreover,
\begin{equation}\label{eq:reduction}
	(\Phi_1^s)^{-1}(0,b)=\iota\left(\tau^{-1}\bigl(\alpha(s,b)\bigr)\right),
\end{equation}
where $\Phi_1^s=(J_1,H^s)\colon S^2\times S^2\to\RR^2$ (recall \eqref{eq:Hs} for the definition of $H^s$).

Let $D(s,b)$ denote the open disk bounded by $\alpha(s,b)$.
Then, the area of $D(s,b)$ with respect to $\sigma$ is given by
\[
	\Area_{\sigma}{\bigl(D(s,b)\bigr)}%
	=\frac 1\pi\int_0^{\Arccos{b}}\sqrt{\frac{\cos{\theta}-b}{\cos{\theta}+s}}\,d\theta,
\]
where $\Arccos{b}\in [0,\pi]$.

We consider the case $b=-s$ where $0\leq s\leq 1$.
We define subsets of $(-1,1)\times\RR/2\pi\ZZ$ by
\[
	\mathcal{A}_s=(-1,1)\times q\bigl(\{\pm\Arccos(-s)\}\bigr),
\]
\[
	\mathcal{D}_s=(-1,1)\times q\bigl([-\Arccos(-s),\Arccos(-s)]\bigr),
\]
where $q\colon \RR\to\RR/2\pi\ZZ$ is the natural quotient map.
We note that
\[
	\mathcal{A}_s=\tau\left((\Phi_1^s)^{-1}(0,-s)\cap M\right).
\]
For convenience, we set $\alpha(s,-s)=\mathcal{A}_s$ and $D(s,-s)=\mathcal{D}_s$  (see \textsc{Figure} \ref{fig:annulus}).
Since we have
\[
	\Area_{\sigma}\bigl(D(s,-s)\bigr)=\frac 1{4\pi}\int_{-\Arccos(-s)}^{\Arccos(-s)}\left(\int_{-1}^1dz\right)d\theta=\frac 1\pi\Arccos(-s),
\]
fixing $s\in [0,1]$, the function $\Area_{\sigma}{\bigl(D(s,b)\bigr)}$ on $b\in [-s,0]$ is continuous and strictly monotone decreasing.
 
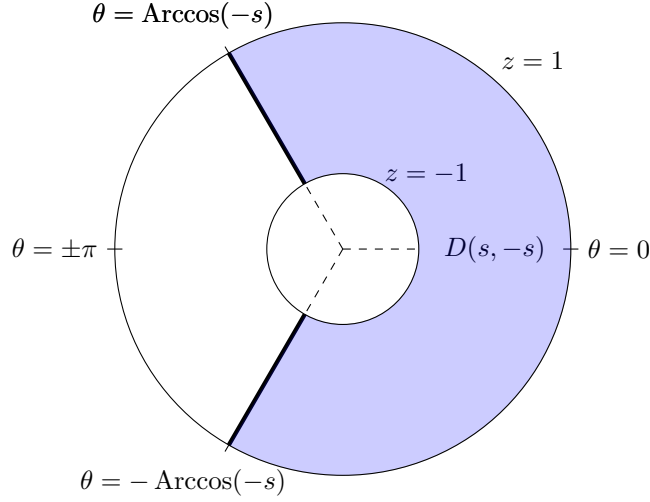
\begin{figure}[h]
\centering
\begin{tikzpicture}
\draw (4,4) circle [x radius=1cm, y radius=1cm];
\draw (4,4) circle [x radius=3cm, y radius=3cm];
\draw[ultra thick] (4-0.5,4+0.866) -- ++(120:2cm);
\draw[ultra thick] (4-0.5,4-0.866) -- ++(-120:2cm);
\draw (6.9,4) -- (7.1,4);
\draw (0.9,4) -- (1.1,4);
\draw (4-1.5,4+2.598) -- ++(120:1mm);
\draw (4-1.5,4-2.598) -- ++(-120:1mm);
\draw[dashed] (4,4) -- ++(0:1cm);
\draw[dashed] (4,4) -- ++(-120:1cm);
\draw[dashed] (4,4) -- ++(120:1cm);
\node[right] at (7.1,4) {$\theta=0$};
\node[left] at (0.9,4) {$\theta=\pm\pi$};
\node at (1.9,7.1) {$\theta=\Arccos(-s)$};
\node at (1.9,0.9) {$\theta=-\Arccos(-s)$};
\node at (1.9,7.1) {$\theta=\Arccos(-s)$};
\node at (6,4) {$D(s,-s)$};
\node at (6.5,6.5) {$z=1$};
\node at (5.1,5) {$z=-1$};
\filldraw [fill=blue, opacity=.2] (4-0.5,4-0.866) -- (4-1.5,4-2.598) arc (-120:120:3) -- (4-0.5,4+0.866) arc (120:-120:1);
\end{tikzpicture}
\caption{The case $s=1/2$. The subset $\alpha(s,-s)$ is the union of the two thick lines and $D(s,-s)$ is the blue-colored area.}
\label{fig:annulus}
\end{figure}

We have the following result on the partial symplectic quasi-states $\zeta_c$, $c\in [-1,-1/2]$, on $(S^2\times S^2,\omega_1)$ introduced in Section \ref{sec:results}.

\begin{theorem}\label{thm:between}
For any $c\in [-1,-1/2]$ the fiber
$(\Phi_1^{s_c})^{-1}(0,-s_c)$ is $\zeta_d$-superheavy for any $d\in [-1,c]$,
and is not $\zeta_d$-superheavy for any $d\in (c,-1/2]$,
where $s_c=-\cos{\left(\pi\Area_{\sigma}{\bigl(D(1,c)\bigr)}\right)}$.
\end{theorem}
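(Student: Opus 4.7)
The plan is to apply Theorem~\ref{theorem:AVSC stem is shv} to the moment map $\Phi_1^{s_c}=(J_1,H^{s_c})$, exploiting the Eliashberg--Polterovich reduction of $(S^2\times S^2,\omega_1)$ along $J_1$ to the open annulus $\bigl((-1,1)\times\RR/2\pi\ZZ,\sigma\bigr)$. All of the geometric input will come from the identity
\[
	\Area_\sigma(\mathcal{D}_{s_c})=\frac{1}{\pi}\Arccos(-s_c)=\Area_\sigma\bigl(D(1,c)\bigr),
\]
built into the definition of $s_c$, combined with the strict monotonicity of $\Area_\sigma\bigl(D(s,b)\bigr)$ in $b$. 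Note that the range $c\in[-1,-1/2]$ yields $\Area_\sigma\bigl(D(1,c)\bigr)\in[1/2,1]$ and hence $s_c\in[0,1]$, so all the constructions recalled above apply.

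For the superheaviness part, I aim to show that for each $d\in[-1,c]$ the fiber $X:=(\Phi_1^{s_c})^{-1}(0,-s_c)$ is a $\zeta_d$-NPH-stem of $\Phi_1^{s_c}$; the degenerate case $c=-1$ is simply the known $\zeta_{-1}$-superheaviness of $L_{-1}$ and needs no further work. For $c\in(-1,-1/2]$, Example~\ref{example:stem} disposes of every fiber $(\Phi_1^{s_c})^{-1}(a,b)$ with $(a,b)\notin\{0\}\times[-s_c,0]$, since these are displaceable from themselves and hence not $\zeta_d$-pseudoheavy by Proposition~\ref{not pseudoheavy}. The remaining fibers $(\Phi_1^{s_c})^{-1}(0,b)$ with $b\in(-s_c,0]$ correspond via $\tau$ to contractible simple closed curves $\alpha(s_c,b)$, and strict monotonicity gives
\[
	\Area_\sigma\bigl(D(s_c,b)\bigr)<\Area_\sigma(\mathcal{D}_{s_c})=\Area_\sigma\bigl(D(1,c)\bigr)\leq\Area_\sigma\bigl(D(1,d)\bigr).
\]
A compactly supported area-preserving isotopy of the open annulus therefore pushes $\alpha(s_c,b)$ into the open disk bounded by $\alpha(1,d)$; lifting this $S^1$-equivariantly to $(S^2\times S^2,\omega_1)$ displaces $(\Phi_1^{s_c})^{-1}(0,b)$ from $L_d$. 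Since $L_d$ is $\zeta_d$-superheavy by \cite{EP09,FOOO}, Proposition~\ref{not pseudoheavy} again shows these fibers are not $\zeta_d$-pseudoheavy. Thus $X$ is a $\zeta_d$-NPH-stem, and Theorem~\ref{theorem:AVSC stem is shv} yields its $\zeta_d$-superheaviness.

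For the non-superheaviness part, I will displace $X$ from $L_d$ for each $d\in(c,-1/2]$. Strict monotonicity now gives
\[
	\Area_\sigma\bigl(D(1,d)\bigr)<\Area_\sigma\bigl(D(1,c)\bigr)=\Area_\sigma(\mathcal{D}_{s_c}),
\]
so a compactly supported area-preserving isotopy of the annulus can move $\alpha(1,d)$ into $\mathcal{D}_{s_c}\setminus\mathcal{A}_{s_c}$, making it disjoint from $\mathcal{A}_{s_c}=\alpha(s_c,-s_c)$. After lifting and recalling that $L_d$ avoids the singular points $NS,SN\in X$ whenever $d>-1$, this produces a Hamiltonian diffeomorphism of $(S^2\times S^2,\omega_1)$ displacing $L_d$ from $X$; the corner case $c=-1$ is even simpler, since $\alpha(1,d)$ and $\mathcal{A}_1$ are already disjoint in the annulus. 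Because $L_d$ is $\zeta_d$-superheavy and two $\zeta_d$-superheavy subsets are non-displaceable from each other by Theorem~\ref{prop:shv is hv}, $X$ cannot be $\zeta_d$-superheavy.

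The main technical obstacle is the lifting step: promoting a compactly supported area-preserving isotopy of the open annulus to a Hamiltonian isotopy of $(S^2\times S^2,\omega_1)$ that realizes the desired displacement. The strategy, implicit in \cite{ElP} and in the reduction setup recalled above, is to take the generating Hamiltonian $h$ on the annulus, pull it back via $\tau$ to an $S^1$-invariant function on $J_1^{-1}(0)\cap M$, extend by a cutoff in $J_1$ to an $S^1$-invariant Hamiltonian supported in an $S^1$-invariant neighborhood of that level set and away from the fixed points $NS,SN$, and then extend by zero to all of $S^2\times S^2$. The resulting Hamiltonian Poisson-commutes with $J_1$, so its flow preserves $J_1^{-1}(0)$, descends to the prescribed annulus isotopy, and effects the displacement within $J_1^{-1}(0)\setminus\{NS,SN\}$.
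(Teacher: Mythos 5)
Your argument is correct and takes essentially the same route as the paper: the same area comparisons in the reduced annulus, the same verification of the $\zeta_d$-NPH-stem condition via Example \ref{example:stem} and Proposition \ref{not pseudoheavy}, Theorem \ref{theorem:AVSC stem is shv} for the superheaviness, and displacement from the superheavy $L_d$ (together with Theorem \ref{prop:shv is hv}) for the non-superheaviness. The only notable difference is that the $S^1$-equivariant lifting step you sketch at the end is precisely Proposition \ref{prop:lift}, which the paper simply quotes from \cite{AM} instead of proving.
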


To prove Theorem \ref{thm:between}, we use the following proposition.

\begin{proposition}[see, for example, the proof of {\cite[Lemma 3.1]{AM}}]\label{prop:lift}
Let $X$ be a subset of $(-1,1)\times\RR/2\pi\ZZ$ and $\psi\in\Ham\bigl((-1,1)\times\RR/2\pi\ZZ,\sigma\bigr)$.
Then, there exists $\bar{\psi}\in\Ham(S^2\times S^2,\omega_1)$ such that $\supp{\bar{\psi}}\subset M$
and $\bar{\psi}\left(\iota\bigl(\tau^{-1}(X)\bigr)\right)=\iota\left(\tau^{-1}\bigl(\psi(X)\bigr)\right)$.
\end{proposition}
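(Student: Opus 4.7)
My plan is to lift a compactly supported Hamiltonian generating $\psi$ to an $S^1$-invariant Hamiltonian on a tubular neighborhood of $\Sigma:=(J_1|_M)^{-1}(0)$ in $M$, cut it off to be supported in $M$, and then check that the resulting flow on $(S^2\times S^2,\omega_1)$ preserves $\Sigma$ and descends, via $\tau$, to the given flow on the annulus.

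First I will write $\psi=\varphi_h^1$ for a compactly supported time-dependent Hamiltonian $h_t\colon(-1,1)\times\RR/2\pi\ZZ\to\RR$. Note that $\tau$ is proper: the preimage of a compact set $K_0\subset(-1,1)\times\RR/2\pi\ZZ$ is parametrized by $(z,\theta)\in K_0$ together with an $S^1$-angle $\phi$ (the angle of $(x_1,y_1)$ in $\RR^2$), and $K_0$ is automatically bounded away from $z=\pm 1$. Hence $K:=\bigcup_{t\in[0,1]}\tau^{-1}(\supp h_t)$ is a compact subset of $\Sigma\subset M$, bounded away from the poles $N,S$. I then fix an $S^1$-invariant Riemannian metric on $S^2\times S^2$ (obtained by averaging any metric under the action generated by $J_1$) and let $\pi\colon W\to\Sigma$ be the nearest-point retraction from a small tubular neighborhood $W\subset M$ of $\Sigma$; by construction $\pi$ is $S^1$-equivariant. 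I set $\tilde H_t:=h_t\circ\tau\circ\pi$, choose an $S^1$-invariant bump function $\chi\in C^\infty_c(W,[0,1])$ that is identically $1$ on an $S^1$-invariant neighborhood of $K$, and define $H_t:=\chi\cdot\tilde H_t$, extended by zero to $S^2\times S^2$. By construction $H_t$ is a smooth, compactly supported, $S^1$-invariant Hamiltonian with $\supp H_t\subset M$.

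For the verification, $S^1$-invariance of $H_t$ gives $\{J_1,H_t\}\equiv 0$, so $\varphi_{H_t}^s$ preserves every level set of $J_1$ and in particular $\Sigma$. For $p\in K$, where $H_t$ coincides with $h_t\circ\tau$, and any $v\in T_p\Sigma$, I use $\iota^*\omega_1=\tau^*\sigma$ on $\Sigma$ to compute
\[
(dh_t)_{\tau(p)}(\tau_*v)=d(h_t\circ\tau)(v)=dH_t(v)=\omega_1(X_{H_t}(p),v)=\sigma(\tau_*X_{H_t}(p),\tau_*v);
\]
since $\tau_*\colon T_p\Sigma\to T_{\tau(p)}\bigl((-1,1)\times\RR/2\pi\ZZ\bigr)$ is surjective, this forces $\tau_*X_{H_t}(p)=X_{h_t}(\tau(p))$, whence $\tau\circ\varphi_{H_t}^s=\varphi_h^s\circ\tau$ on $\Sigma$. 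Setting $\bar\psi:=\varphi_{H_t}^1$, this gives $\tau(\bar\psi(p))=\psi(\tau(p))$ for every $p\in\tau^{-1}(X)$, which is exactly the required identity $\bar\psi(\iota(\tau^{-1}(X)))=\iota(\tau^{-1}(\psi(X)))$. The only real point of care is keeping the cutoff strictly inside $M$, away from the $S^1$-fixed points $N,S$; this is automatic because $K$ is bounded away from the poles by properness of $\tau$. Apart from that, the construction is the standard lift via symplectic reduction.
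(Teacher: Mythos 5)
Your construction---lifting the compactly supported Hamiltonian $h_t$ generating $\psi$ to the $S^1$-invariant Hamiltonian $H_t=\chi\cdot(h_t\circ\tau\circ\pi)$ supported in $M$, and checking that $X_{H_t}$ is $\tau$-related to $X_{h_t}$ along the level set---is correct and is essentially the same standard reduction-lifting argument that the paper invokes by citing the proof of Lemma 3.1 in \cite{AM}. Two harmless points: the $\tau$-relatedness extends from $K$ to all of $\Sigma$ because $H_t$ vanishes identically near every point of $\Sigma\setminus K$ (so both vector fields vanish there), and with the paper's convention $\iota_{X_H}\omega=-dH$ both sides of your displayed computation pick up the same sign, leaving the conclusion $\tau_{*}X_{H_t}=X_{h_t}\circ\tau$ unchanged.
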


\begin{proof}[Proof of Theorem \ref{thm:between}]
Let $c\in [-1,-1/2]$.
Since the function $b\mapsto\Area_{\sigma}{\bigl(D(1,b)\bigr)}$ is strictly monotone decreasing,
\[
	\frac 12=\Area_{\sigma}\bigl(D(1,-1/2)\bigr)\leq\Area_{\sigma}{\bigl(D(1,c)\bigr)}\leq\Area_{\sigma}{\bigl(D(1,-1)\bigr)}=1.
\]
Therefore, $0\leq s_c\leq 1$.
Moreover,
\[
	\Area_{\sigma}{\bigl(D(s_c,-s_c)\bigr)}=\frac 1\pi\Arccos(-s_c)=\Area_{\sigma}{\bigl(D(1,c)\bigr)}.
\]

Let $d\in [-1,c]$.
If $b\in (-s_c,0]$, then
\[
	\Area_{\sigma}{\bigl(D(s_c,b)\bigr)}<\Area_{\sigma}{\bigl(D(s_c,-s_c)\bigr)}=\Area_{\sigma}{\bigl(D(1,c)\bigr)}\leq\Area_{\sigma}{\bigl(D(1,d)\bigr)}.
\]
Thus, the contractible simple closed curve $\alpha(s_c,b)$ is displaceable from $\alpha(1,d)$ in the annulus $(-1,1)\times\RR/2\pi\ZZ$.
Namely, we can choose a Hamiltonian diffeomorphism $\psi$ of $\bigl((-1,1)\times\RR/2\pi\ZZ,\sigma\bigr)$ such that
\begin{equation}\label{eq:scc disp}
	\psi\bigl(\alpha(s_c,b)\bigr)\cap\alpha(1,d)=\emptyset.
\end{equation}
Then, applying Proposition \ref{prop:lift} for $\alpha(s_c,b)$ yields that there exists a Hamiltonian diffeomorphism $\bar{\psi}$ of $(S^2\times S^2,\omega_1)$ such that $\supp{\bar{\psi}}\subset M$
and
\begin{equation}\label{eq:fiber disp}
	\bar{\psi}\left(\iota\left(\tau^{-1}\bigl(\alpha(s_c,b)\bigr)\right)\right)=\iota\left(\tau^{-1}\left(\psi\bigl(\alpha(s_c,b)\bigr)\right)\right).
\end{equation}
By \eqref{eq:reduction},
$(\Phi_1^{s_c})^{-1}(0,b)=\iota\left(\tau^{-1}\bigl(\alpha(s_c,b)\bigr)\right)\subset M$.
Note that $\iota\left(\tau^{-1}\bigl(\alpha(1,d)\bigr)\right)=L_d\cap M$.
Since $\supp{\bar{\psi}}\subset M$, using \eqref{eq:scc disp} and \eqref{eq:fiber disp},
\begin{align*}
	\bar{\psi}\left((\Phi_1^{s_c})^{-1}(0,b)\right)\cap L_d%
	&=\bar{\psi}\left((\Phi_1^{s_c})^{-1}(0,b)\cap M\right)\cap L_d\\
	&=\bar{\psi}\left((\Phi_1^{s_c})^{-1}(0,b)\right)\cap (L_d\cap M)\\
	&=\bar{\psi}\left(\iota\left(\tau^{-1}\bigl(\alpha(s_c,b)\bigr)\right)\right)\cap\iota\left(\tau^{-1}\bigl(\alpha(1,d)\bigr)\right)\\
	&=\iota\left(\tau^{-1}\left(\psi\bigl(\alpha(s_c,b)\bigr)\right)\right)\cap\iota\left(\tau^{-1}\bigl(\alpha(1,d)\bigr)\right)\\
	&=\iota\left(\tau^{-1}\left(\psi\bigl(\alpha(s_c,b)\bigr)\cap\alpha(1,d)\right)\right)\\
	&=\emptyset.
\end{align*}
Therefore, $(\Phi_1^{s_c})^{-1}(0,b)$ is displaceable from $L_d$ in $(S^2\times S^2,\omega_1)$.
Since $L_d$ is $\zeta_d$-superheavy,
Proposition \ref{not pseudoheavy} implies that $(\Phi_1^{s_c})^{-1}(0,b)$ is not $\zeta_d$-pseudoheavy.
Moreover, by Example \ref{example:stem}, $(\Phi_1^{s_c})^{-1}(a,b)$ is displaceable from itself
whenever $(a,b)\not\in\{0\}\times [-s_c,0]$.
Therefore, the fiber $(\Phi_1^{s_c})^{-1}(0,-s_c)$ is a $\zeta_d$-NPH-stem,
and hence, is $\zeta_d$-superheavy by Theorem \ref{theorem:AVSC stem is shv}.

Let $d\in (c,-1/2]$.
Since the function $b\mapsto\Area_{\sigma}{\bigl(D(s_c,b)\bigr)}$ is continuous and strictly monotone decreasing,
there uniquely exists $b_d\in (-s_c,0)$ such that
\[
	\Area_{\sigma}{\bigl(D(s_c,b_d)\bigr)}=\Area_{\sigma}{\bigl(D(1,d)\bigr)}.
\]
Then, 
\[
	\Area_{\sigma}{\bigl(D(s_c,-s_c)\bigr)}>\Area_{\sigma}{\bigl(D(s_c,b_d)\bigr)}=\Area_{\sigma}{\bigl(D(1,d)\bigr)}.
\]
Hence the subset $\alpha(s_c,-s_c)$ is displaceable from $\alpha(1,d)$ in the annulus $(-1,1)\times\RR/2\pi\ZZ$.
By \eqref{eq:reduction},
$L_d=\iota\left(\tau^{-1}\bigl(\alpha(1,d)\bigr)\right)\subset M$.
We note that $\iota\left(\tau^{-1}\bigl(\alpha(s_c,-s_c)\bigr)\right)=(\Phi_1^{s_c})^{-1}(0,-s_c)\cap M$.
Therefore, applying Proposition \ref{prop:lift} as above, we can prove that $(\Phi_1^{s_c})^{-1}(0,-s_c)$ is displaceable from $L_d$ in $S^2\times S^2$.
Since $L_d$ is $\zeta_d$-superheavy, Theorem \ref{prop:shv is hv} implies that the fiber $(\Phi_1^{s_c})^{-1}(0,-s_c)$ is not $\zeta_d$-superheavy.
This completes the proof of Theorem \ref{thm:between}.
\end{proof}

Now we prove Theorem \ref{intro memo} stated in Section \ref{sec:results}

\begin{proof}[Proof of Theorem \ref{intro memo}]
Let $c\in [-1,-1/2]$.
We set
\[
	Z_c=(\Phi_1^{s_c})^{-1}(0,-s_c)=\iota\left(\tau^{-1}\bigl(\alpha(s_c,-s_c)\bigr)\right)\cup\{(N,S),(S,N)\},
\]
where $s_c=-\cos{\left(\pi\Area_{\sigma}{\bigl(D(1,c)\bigr)}\right)}$.
If $c\neq -1$, then $Z_c$ is homeomorphic to the doubly pinched torus $Z$
(Note that the points $(N,S),(S,N)$ correspond to the pinched points).

By Theorem \ref{thm:between}, the subset $Z_c$ of $S^2\times S^2$ is $\zeta_d$-superheavy for any $d\in [-1,c]$.
In particular, Theorem \ref{prop:shv is hv} implies that $Z_c$ is non-displaceable from the $\zeta_{-1}$-superheavy subset $Z_{c'}$ for any $c'\in [-1,-1/2]$ and from the $\zeta_d$-superheavy subset $L_d$ for any $d\in [-1,c]$.
Moreover, we have shown that $Z_c$ is displaceable from $L_d$ for any $d\in (c,-1/2]$ in the proof of Theorem \ref{thm:between}.
\end{proof}

\section*{Acknowledgments}

The authors would like to thank Professors Michael Entov, Kaoru Ono, and Leonid Polterovich for some comments.
Especially, they thank Michael and Kaoru for suggesting Proposition \ref{simple qs deha phv ha hv} and Corollary \ref{existence of hv fiber in simple case} and for giving some advice on notions, respectively.
They also thank Renato Vianna for recommending the first author to read papers on semi-toric geometry.


\bibliographystyle{amsart}

\end{document}